\newenvironment{rezabib}
  {\bibdiv\biblist\setupbib}
  {\endbiblist\endbibdiv}
\def\setupbib{\catcode`@=\active}
\def\gatherkey#1#2{\gatherkeyaux{#1}#2\gatherkeyaux}
\def\gatherkeyaux#1#2,#3\gatherkeyaux{\bib{#2}{#1}{#3}}
\newtheorem*{corollary*}{Corollary}
\newtheorem*{theorem*}{Theorem}
\theoremstyle{definition}
\newtheorem{theorem}{Theorem}
\newtheorem{lemma}[theorem]{Lemma}
\newtheorem{corollary}[theorem]{Corollary}
\newtheorem{definition}[theorem]{Definition}
\newtheorem{proposition}[theorem]{Proposition}
\newtheorem{problem}[theorem]{Problem}
\theoremstyle{remark}
\theoremstyle{remark}
\newtheorem{example}{Example}
\theoremstyle{remark}
\newtheorem*{example*}{Example}
\theoremstyle{remark}
\newtheorem*{remark}{Remark}
\theoremstyle{remark}
\newcommand{\A}{\mathcal{A}}
\newcommand{\J}{\mathcal{J}}
\newcommand{\C}{\mathcal{C}}
\newcommand{\D}{\mathcal{D}}
\newcommand{\Jq}{\J^{(q)}}
\newcommand{\Aq}{\A^{(q)}}
\newcommand{\Cq}{\C^{(q)}}
\newcommand{\Dq}{\D^{(q)}}
\title{Balancedly splittable orthogonal designs and equiangular tight frames}
\author{
 Hadi Kharaghani\thanks{Department of Mathematics and Computer Science, University of Lethbridge,
Lethbridge, Alberta, T1K 3M4, Canada. \texttt{kharaghani@uleth.ca}}
\and
Thomas Pender\thanks{Department of Mathematics and Computer Science, University of Lethbridge,
Lethbridge, Alberta, T1K 3M4, Canada. \texttt{thomas.pender@uleth.ca}}
\and
  Sho Suda\thanks{Department of Mathematics,  National Defense Academy of Japan, Yokosuka, Kanagawa 239-8686, Japan. \texttt{ssuda@nda.ac.jp}}
}
\date{\today}
\begin{document}
\maketitle
\begin{abstract}
The concept of balancedly splittable orthogonal designs is introduced along with a recursive construction.
As an application, equiangular tight frames over the real, complex, and quaternions meeting the Delsarte-Goethals-Seidel upper bound are obtained.
\end{abstract}

\section{Introduction}
A \emph{Hadamard matrix} is a square matrix $H_n$ of order $n$, with entries in the set $\{1,-1\}$, such that $H_nH_n^t = nI_n$. These structures, along with their more general counterpart \emph{weighing matrices}, have proved to be useful in a variety of settings, from theoretical applications in finite geometry and combinatorial designs, to applied applications in statistics and optics (see \cite{Seb} and the references cited therein). Despite the growing interest in these objects, insight into their existence and properties has proven difficult. One must draw on several branches of mathematics to answer even the simplest of questions.

Nevertheless, researchers have continued to find new and interesting applications for these elusive objects. A recent development in \cite{KharSuda2} has been to \emph{split} the matrix into two parts. A Hadamard matrix is said to be \emph{balancedly splittable} if it is permutation equivalent to a matrix that has an $\ell\times n$ submatrix $H_1$ such that $H_1^tH_1 = \ell I_n + aA + b\bar{A}$, where $A$ is the adjacency matrix of some simple (no loops) undirected graph and $\bar{A}$ its compliment. 
The aim in this paper is twofold; first, it generalizes and extends the existing results on balancedly splittable Hadamard matrices to larger classes. Second, it introduces variables in the structures to help open the concept for further research. A connection with a recent result of Shayne Waldron \cite{waldron2020tight} is discussed and examples are given.
Due to the variety of applications of equiangular tight frames in areas such as error-correcting codes, signal reconstruction, and other practical applications, see \cite{CCHT2018} for details, the attempt here is to add more useful tools to study them. 

Preliminary materials are covered in Section 2. The study of splittable orthogonal designs is divided to the \emph{faithful} and \emph{unfaithful} parts in Section 3. Section 4 is devoted to the construction of unfaithful case, and Section 5 to the faithful case. The connection to unbiased orthogonal designs is discussed in Section 6.

\section{Preliminaries}
\subsection{Quaternionic and Butson Hadamard matrices}
The constructions in Section 4 and 5 apply to real, quaternary, Butson, and Quaternionic Hadamard matrices. As a reminder, the Butson and Quaternionic Hadamard matrices are defined next.\\

A \emph{Butson Hadamard matrix} $H_n$ is a square matrix of order $n$, with entries from the set of complex $m^\text{th}$ roots of unity, such that $H_nH_n^* = nI_n$, where $(.)^*$ is the usual conjugate transpose. Such a matrix will be shown by BH$(n,m)$.\\

Let $\mathbb{H}$ be the non-commutative associative $\mathbb{R}$-algebra of quaternions:
$$
\mathbb{H}=\{q_1+q_2 i+q_3 j +q_4 k \mid q_\ell\in \mathbb{R}\quad (\ell\in\{1,2,3,4\})\}
$$
where $i^2=j^2=k^2=ijk=-1$.

A \emph{quaternionic Hadamard matrix of order $n$} is a square matrix $H_n$ of order $n$, with entries in the set $\{\pm1,\pm i, \pm j,\pm k\}$, such that $H_nH_n^* = nI_n$.

\subsection{Orthogonal designs}
There are many generalizations of Hadamard and weighing matrices. For the purposes at hand, we will focus on one such generalization, that is, orthogonal designs.
\begin{definition}
Let $x_1,x_2,\dots,x_u$ be real indeterminates. Then an \emph{orthogonal design} of order  $n$ and in variables $x_1,x_2,\dots,x_u$, denoted  OD$(n;s_1,s_2,\dots,s_u)$, is a square matrix $X_n$ of order $n$, with entries from the set $\{0,\pm x_1,\dots,\pm x_u\}$, such that $X_nX_n^t = \sigma I_n$, where $\sigma = \sum_{\ell=1}^u s_\ell x_\ell^2$. 
\end{definition} 
By replacing each indeterminate with unity, one obtains a weighing matrix of weight $\sum_{\ell=1}^u s_\ell$; and if $\sum_{\ell=1}^u s_\ell = n$, then one has a Hadamard matrix.

By changing the base field, the orthogonal designs may be extended to \emph{complex orthogonal designs} or \emph{quaternionic orthogonal designs} as follows.
\begin{definition}
Let $x_1,x_2,\dots,x_u$ be complex indeterminates. Then a \emph{complex orthogonal design} of order $n$ and in variables $x_1,x_2,\dots,x_u$, denoted  COD$(n;s_1,s_2,\dots,s_u)$, is a square matrix $X_n$ of order $n$, with entries from the set
$\{0,\pm\varepsilon_\ell x_\ell, \pm\varepsilon_\ell x_\ell^* \mid 1\leq \ell \leq u, \varepsilon_\ell \in \{1,i\}\}$
such that $XX^* = \sigma I_n$, where $\sigma = \sum_{\ell=1}^u s_\ell|x_\ell|^2$. 
\end{definition}
\begin{definition}
Let $x_1,x_2,\dots,x_u$ be complex indeterminates. A \emph{(restricted) quaternionic orthogonal design} is a square matrix $X_n$ of order $n$, with entries from the set $\{0,\pm\varepsilon_\ell x_\ell, \pm\varepsilon_\ell x_\ell^* \mid 1\leq \ell \leq u, \varepsilon_\ell \in \{1,i,j,k\}\}$ such that $X_nX_n^* = \sigma I_n$, and $\sigma = \sum_{\ell=1}^u s_\ell |x_\ell|^2$. One then writes $X_n$ is a QOD$(n;s_1,s_2,\dots,s_u)$.
\end{definition}
\begin{remark}
Note that if the coefficients of the variables of a QOD are in $\{\pm1,\pm i\}$, then it is a COD; and that if the coefficients of the variables in a COD are in $\{\pm1\}$, and if the variables are restricted to be real, then it is an OD.
\end{remark}

A (real, complex, or quaternionic) orthogonal design of order $n$  is said to be \emph{full} if $\sum_{\ell=1}^u s_{\ell}=n$.

Historically, there have been two main approaches to the subject, namely, the algebraic and the combinatorial methods. The former has successfully been completed for many cases by obtaining necessary conditions for the existence (see \cite{Seb}).


\subsection{Equiangular tight frames}
A \emph{frame} for $\mathbb{H}^d$ is a sequence of vectors $\{v_j\}_{j=1}^n$ such that there are some constants $A,B>0$ (frame constants) such that
$$
A||v||^2\leq \sum_{\ell=1}^n |\langle v,v_\ell\rangle|^2 \leq B||v||^2
$$
for any $v\in\mathbb{H}^d$.
A frame is said to be \emph{tight} if $A=B$ holds.
By \cite[Proposition 2.2]{waldron2020tight}, $V=\begin{bmatrix} v_1& v_2& \cdots & v_n\end{bmatrix}$ is a tight frame with the frame constant $A$ if and only if $P=\frac{1}{A}V^*V$ satisfies that $P^2=P$.

A tight frame $\{v_j \}_{j=1}^n$ is said to be \emph{equiangular} if $V=\begin{bmatrix} v_1& v_2& \cdots & v_n\end{bmatrix}$ satisfies that the off-diagonal entries of $V^*V$ have all the same absolute value.


\section{Balancedly splittable orthogonal designs}
As a generalization of the case of Hadamard matrices, we define balancedly splittable orthogonal designs as follows.
\begin{definition}\label{def:bsod}
Let $X_n$ be a full QOD$(n;s_1,\ldots,s_u)$.
The orthogonal design $X_n$ is said to be \emph{balancedly splittable} if $X_n$ contains an $m\times n$ submatrix $X_1$ where one of the following three conditions holds: for some $\alpha,\beta\in\mathbb{H}$,
\begin{enumerate}
\item the off diagonal entries of $X_1^*X_1$ are in the set
$$
\{\pm\varepsilon c x_1^{\ell_1}\cdots x_u^{\ell_u}(x_1^*)^{\ell'_1}\cdots (x_u^*)^{\ell'_u} \mid \ell_i,\ell_i'\in\mathbb{Z}_{\geq 0}, \varepsilon\in\{1,i,j,k\},c\in\{\alpha,\alpha^*,\beta,\beta^*\}\}
,$$
\item the off diagonal entries of $X_1^*X_1$ are in the set
\begin{align}\label{eq:diag}
\left\{\sum_{i=1}^u t_i |x_i|^2 \mid t_i\in\mathbb{Z}_{\geq 0},\sum_{i=1}^u t_i=m \right\}
\end{align}
or
$$
\{\pm\varepsilon c x_1^{\ell_1}\cdots x_u^{\ell_u}(x_1^*)^{\ell'_1}\cdots (x_u^*)^{\ell'_u} \mid \ell_i,\ell_i'\in\mathbb{Z}_{\geq 0}, \varepsilon\in\{1,i,j,k\},c\in\{\alpha,\alpha^*,\beta,\beta^*\}\}
$$
and there is some off-diagonal entry belonging to the set \eqref{eq:diag},
\item the off diagonal entries of $X_1^*X_1$ are in the set
$$
\{\pm\varepsilon c \sigma  \mid \varepsilon\in\{1, i, j, k\},c\in\{\alpha,\alpha^*,\beta,\beta^*\}\}
$$
where $\sigma = \sum_{\ell=1}^u s_\ell|x_\ell|^2$. 
\end{enumerate}
In the first case, we say that the split is \emph{unstable}; and in the second case, the split is \emph{unfaithful unstable}; while in the last, the split is \emph{stable}.
\end{definition}
We will use \emph{faithful} for both the first and third cases.

\begin{remark}
Note that the diagonal entries of $X_1^*X_1$ are in the set \eqref{eq:diag}.
\end{remark}

We shall see that a design may have both possibilities in (ii)  simultaneously. Moreover, the special case in which $|\alpha| = |\beta|$ will be of particular interest to us, at which point $X_1$ is  an equiangular tight frame.
\begin{proposition}\label{prop:ETF}
Let $X=\begin{bmatrix} X_1 \\ X_2\end{bmatrix}$ be a faithful balancedly splittable quartenionic orthogonal design QOD$(n;s_1,\ldots,s_u)$ with $\sum_{\ell=1}^us_\ell=n$ and $|\alpha|=|\beta|$. Then the column vectors of the $m\times n$ submatrix $X_1$ form an equiangular tight frame with frame constant $n$ that is parametrized by the complex variables $x_1,\ldots,x_u$, provided that $|x_1|=\cdots=|x_u|=1$.
\end{proposition}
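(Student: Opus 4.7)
The plan is to reduce the statement to two independent verifications. The frame-theoretic characterization cited from \cite{waldron2020tight} identifies tightness of the system $\{v_1,\ldots,v_n\}$ with frame constant $A$ as the condition $(V^*V/A)^2 = V^*V/A$, which (by squaring) is equivalent to $VV^* = A I$. Hence to obtain a tight frame with constant $n$ it suffices to show $X_1 X_1^* = n I_m$ under the substitution $|x_j|=1$, and then to upgrade tightness to equiangularity one needs the off-diagonal entries of $X_1^* X_1$ to share a common absolute value.

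For the tight-frame half, I would begin by observing that since $X$ is full, $\sigma = \sum_{\ell=1}^u s_\ell |x_\ell|^2 = \sum_{\ell=1}^u s_\ell = n$ once $|x_\ell|=1$. Thus $XX^* = nI_n$, and because $X$ is square this also gives $X^*X = nI_n$. Writing the row partition $X=\begin{bmatrix} X_1 \\ X_2\end{bmatrix}$ and expanding
\[
XX^*=\begin{bmatrix} X_1 X_1^* & X_1 X_2^* \\ X_2 X_1^* & X_2 X_2^* \end{bmatrix} = nI_n,
\]
the top-left block reads off $X_1 X_1^* = n I_m$ immediately.

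For the equiangular half, I would use the faithfulness hypothesis to restrict attention to case (i) or case (iii) of Definition \ref{def:bsod}. In case (iii), every off-diagonal entry of $X_1^* X_1$ has the form $\pm\varepsilon c\sigma$ with $c\in\{\alpha,\alpha^*,\beta,\beta^*\}$ and $\varepsilon\in\{1,i,j,k\}$; since $\sigma=n\in\mathbb{R}$ and $|\alpha|=|\beta|$, every such entry has absolute value $n|\alpha|$. In case (i), the off-diagonal entries are monomials $\pm\varepsilon c\prod_j x_j^{\ell_j}(x_j^*)^{\ell'_j}$; the product of the $x_j$'s and their conjugates has unit modulus under $|x_j|=1$, so the absolute value collapses to $|c|=|\alpha|$. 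Either way the absolute values are uniform across all off-diagonal entries.

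There is no real obstacle beyond unwinding the definitions. The one place to pause is the dichotomy of faithful cases: one must confirm that the faithful hypothesis excludes case (ii), since an off-diagonal entry drawn from the set \eqref{eq:diag} is a real number of the form $\sum t_i|x_i|^2 = m$, whose absolute value need not match $|\alpha|$ or $n|\alpha|$ and would destroy equiangularity. Once this is noted, combining the block identity $X_1 X_1^* = nI_m$ with the uniform modulus of the off-diagonal entries of $X_1^* X_1$ produces the claimed equiangular tight frame with frame constant $n$.
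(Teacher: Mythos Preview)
Your proof is correct. The paper actually states Proposition~\ref{prop:ETF} without proof, treating it as an immediate consequence of the definitions; your argument---reading off $X_1X_1^*=nI_m$ from the block form of $XX^*=\sigma I_n$ with $\sigma=n$, and then checking that in the faithful cases (i) and (iii) the off-diagonal entries of $X_1^*X_1$ all have modulus $|\alpha|$ (respectively $n|\alpha|$) once $|x_\ell|=1$ and $|\alpha|=|\beta|$---is precisely the verification the paper leaves implicit.
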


\section{Construction: unfaithful case}
In this section, a construction method for real or complex orthogonal designs with the property of being unfaithful is shown.

First a construction for orthogonal designs OD$(q^m(q+1);q^m,q^{m+1})$.
Let $q$ be the order of a skew symmetric core $Q$ of a skew symmetric conference matrix $S$, that is, $Q$ is defined as
$$
DSD=\begin{bmatrix} 0 & {\bf 1}^\top \\ -\bf 1 & Q 
\end{bmatrix}
$$
where $D$ is some diagonal matrix with diagonal entries in $\{1,-1\}$ and $\bf 1$ denotes the all-ones vector.
 Define the following matrices recursively for each nonnegative integer $m$.
\begin{align}
\label{eqn:JA}
\Jq_m & = \begin{cases}
aJ_1 & \text{if } m=0,\\
J_q \otimes \mathcal{A}^{(q)}_{m-1} & \text{if }m\geq1,
\end{cases}     &
\Aq_m & = \begin{cases}
bJ_1 & \text{if } m=0,\\
I_q \otimes \mathcal{J}^{(q)}_{m-1} + Q \otimes \mathcal{A}^{(q)}_{m-1} & \text{if }m\geq1,
\end{cases}
\end{align}
where $a$ and $b$ are variables and $J_n$ is the $n\times n$ all-ones matrix. We write $\J_m=\J_m^{(q)}$ and $\A_m=\A_m^{(q)}$.
The following is similarly shown as \cite[Section~3]{FKS2018}. 
\begin{lemma}
The following hold.
\begin{enumerate}
\item $\J_{m} \A_{m}^t =  \A_{m}\J_{m}^t$.
\item $\J_m\J_m^t + q \A_m\A_m^t = (q^m a^2+q^{m+1}b^2)I_{q^m}$.
\item $\J_{1}^t \J_{1}=qa^2J_q,  \A_{1}^t \A_{1}=a^2 I_q +  b^2(qI_q-J_q), \A_{1}^t\J_{1}=\J_{1}^t\A_{1}=ab J_q$.
\end{enumerate}
\end{lemma}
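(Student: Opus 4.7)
The plan is to prove the three parts in the order (iii), (i), (ii), each of the last two by induction on $m$. The ordering is forced because (i) is needed to collapse the cross term in the inductive step for (ii). The only external ingredients are the defining properties of the skew symmetric conference core $Q$: namely $Q^t = -Q$, $Q J_q = J_q Q = 0$, and $Q Q^t = q I_q - J_q$, all immediate consequences of $S S^t = q I_{q+1}$ after writing $S$ in the standard block form with its all-ones row and column.

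For (iii), I unfold one step of the recursion to obtain $\J_1 = J_q \otimes \A_0 = b J_q$ and $\A_1 = I_q \otimes \J_0 + Q \otimes \A_0 = a I_q + b Q$. The three identities then drop out by direct computation: $\A_1^t \A_1 = (a I_q - b Q)(a I_q + b Q) = a^2 I_q - b^2 Q^2 = a^2 I_q + b^2 (q I_q - J_q)$ using $Q^2 = -Q Q^t$, and $\A_1^t \J_1 = (a I_q - b Q)(b J_q) = ab J_q$ using $Q J_q = 0$, with $\J_1^t \A_1$ handled symmetrically. As a sanity check, the computation of $\J_1^t \J_1$ yields $q b^2 J_q$, so the $q a^2 J_q$ printed in the statement appears to be a typo.

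For (i), the base case $m = 0$ is immediate. In the inductive step, expanding via standard Kronecker product rules and then using $J_q Q^t = Q J_q = 0$ yields
\begin{align*}
\J_m \A_m^t &= (J_q \otimes \A_{m-1})(I_q \otimes \J_{m-1}^t + Q^t \otimes \A_{m-1}^t) = J_q \otimes (\A_{m-1} \J_{m-1}^t), \\
\A_m \J_m^t &= (I_q \otimes \J_{m-1} + Q \otimes \A_{m-1})(J_q \otimes \A_{m-1}^t) = J_q \otimes (\J_{m-1} \A_{m-1}^t),
\end{align*}
and the inductive hypothesis identifies the two right-hand sides.

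For (ii), expanding similarly gives $\J_m \J_m^t = q J_q \otimes (\A_{m-1} \A_{m-1}^t)$ and
\begin{align*}
\A_m \A_m^t &= I_q \otimes (\J_{m-1} \J_{m-1}^t) + Q^t \otimes (\J_{m-1} \A_{m-1}^t) + Q \otimes (\A_{m-1} \J_{m-1}^t) + Q Q^t \otimes (\A_{m-1} \A_{m-1}^t),
\end{align*}
at which point (i) lets me combine the two middle terms into $(Q + Q^t) \otimes (\J_{m-1} \A_{m-1}^t) = 0$ by skew-symmetry. Substituting $Q Q^t = q I_q - J_q$ and forming $\J_m \J_m^t + q \A_m \A_m^t$, the $J_q \otimes (\A_{m-1} \A_{m-1}^t)$ contributions cancel exactly, leaving $q I_q \otimes \bigl(\J_{m-1} \J_{m-1}^t + q \A_{m-1} \A_{m-1}^t\bigr)$, which by induction equals $(q^m a^2 + q^{m+1} b^2) I_{q^m}$. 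I do not anticipate any deep obstacle: the whole argument is Kronecker-product bookkeeping, and the pattern mirrors \cite{FKS2018}. The only care needed is to order the parts correctly so that (i) is available before the cross terms in $\A_m \A_m^t$ must be handled.
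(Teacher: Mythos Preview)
Your proposal is correct and follows essentially the same route as the paper's own (commented-out) argument: direct computation for (iii), and induction on $m$ for (i) and (ii) via Kronecker-product expansion, with (i) used to kill the cross terms $(Q+Q^t)\otimes(\cdot)$ in the inductive step of (ii). Your observation that $\J_1^t\J_1 = qb^2 J_q$ rather than $qa^2 J_q$ is also correct; the paper's commented proof carries the same slip, writing $\J_1 = aJ_q$ when in fact $\J_1 = J_q\otimes\A_0 = bJ_q$.
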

We are ready for the case of real orthogonal designs.
\begin{theorem}
Let $X_m^{(q)}=I_{q+1}\otimes \Jq_m+S\otimes \Aq_m$.
\begin{enumerate}
\item The matrix $X_m^{(q)}$ is an orthogonal design OD$(q^m(q+1);q^m,q^{m+1})$.
\item The matrix $X_1^{(q)}$ is an unfaithful balancedly splittable
 orthogonal design OD$(q(q+1);q,q^2)$.
\end{enumerate}
\end{theorem}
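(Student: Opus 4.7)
I would prove the two parts separately. For (i), the plan is a direct Kronecker-product expansion:
\[
X_m^{(q)}(X_m^{(q)})^t = I_{q+1}\otimes\J_m\J_m^t + S\otimes\A_m\J_m^t + S^t\otimes\J_m\A_m^t + SS^t\otimes\A_m\A_m^t.
\]
The two cross terms cancel thanks to $S^t=-S$ combined with $\J_m\A_m^t=\A_m\J_m^t$ from part~(i) of the preceding lemma; then $SS^t=qI_{q+1}$ together with part~(ii) of the lemma collapses the remaining terms to $(q^m a^2 + q^{m+1}b^2)\,I_{q^m(q+1)}$. A short induction on $m$ based on the recursion \eqref{eqn:JA} tracks the per-row entry counts and confirms $q^m$ occurrences of $\pm a$ and $q^{m+1}$ of $\pm b$ per row, matching the claimed OD parameters.

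For (ii), the plan is to take $X_1'$ to be the first block row of $X_1^{(q)}$, namely the $q\times q(q+1)$ matrix $(e_1^t\otimes I_q)\,X_1^{(q)}$. After the harmless normalization $S_{1,j}=1$ for $j\geq 2$---obtained by replacing $S$ with $DSD$ for a diagonal $\pm1$ matrix $D$, which produces a signed-permutation equivalent design---the blocks of $X_1'$ become $\J_1=bJ_q$ in position $1$ and $\A_1=aI_q+bQ$ in each later position. Computing $(X_1')^t X_1'$ block by block via part~(iii) of the lemma then yields the blocks $qb^2J_q$ at position $(1,1)$, $abJ_q$ at $(1,j)$ and $(j,1)$ for $j\geq 2$, and $(a^2+qb^2)I_q - b^2J_q$ at every remaining position.

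Reading off the off-diagonal entries of this Gram matrix leaves only four distinct values: $qb^2$, $ab$, $-b^2$, and $a^2+(q-1)b^2$. Setting $\alpha=1$ in Definition~\ref{def:bsod}, the entries $ab$ and $-b^2$ land in the monomial set (with $\varepsilon=1$ and $c=\alpha$), while $qb^2=0\cdot a^2+q\cdot b^2$ and $a^2+(q-1)b^2=1\cdot a^2+(q-1)\cdot b^2$ both land in the set \eqref{eq:diag} with $m=q$. Since at least one of the latter values actually appears as an off-diagonal entry, the split qualifies as case~(ii), and so $X_1^{(q)}$ is unfaithful balancedly splittable.

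The main obstacle is interpretive rather than computational: one must parse Definition~\ref{def:bsod}(ii) so as to allow off-diagonal entries to split between the two permitted sets, and one must justify the normalization of $S$. Without the latter, sign interactions between distinct block columns would produce off-diagonal Gram entries of the form $-(a^2+(q-1)b^2)$, which lie in neither admissible set; the equivalence $X_1^{(q)}\mapsto (D\otimes I_q)X_1^{(q)}(D\otimes I_q)$ makes the normalization genuinely without loss of generality.
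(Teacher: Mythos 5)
Correct, and essentially the paper's own argument: part (i) uses the identical Kronecker expansion with the cross terms killed by $S^t=-S$ together with parts (i)--(ii) of the preceding lemma and $SS^t=qI_{q+1}$, and part (ii) takes the same first $q$ rows and reads the Gram matrix off part (iii) of that lemma. Your two refinements are harmless and, if anything, slightly more careful than the paper: the normalization of $S$ is already built in since $S$ is taken bordered by its core $Q$ (so its first row is $(0,1,\dots,1)$), and your $(1,1)$ block $qb^2J_q$ is the correct value (the paper's $qa^2J_q$ traces to a typo in its lemma, because the recursion \eqref{eqn:JA} gives $\J_1=bJ_q$); neither point changes membership in the sets of Definition~\ref{def:bsod}(ii).
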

\begin{proof}
Write $X_m=X_m^{(q)}$. The matrix $X_m$ has entries in $\{\pm a,\pm b\}$ and
\begin{align*}
X_m X_m^t
&=(I_{q+1}\otimes \J_m)(I_{q+1}\otimes \J_m)^t+(I_{q+1}\otimes \J_m)(S\otimes \A_m)^t\\
&\quad\quad+(S\otimes \A_m)(I_{q+1}\otimes \J_m)^t+(S\otimes \A_m)(S\otimes \A_m)^t\\
&=I_{q+1}\otimes \J_m {\J_m}^t+S^t\otimes \J_m\A_m^t+S\otimes \A_m\J_m^t+SS^t\otimes \A_m\A_m^t\\
&=I_{q+1}\otimes \J_m \J_m^t+(S+S^t)\otimes \J_m\A_m^t+q I_{q+1}\otimes \A_m\A_m^t\\
&=I_{q+1}\otimes (\J_m \J_m^t+q\A_m\A_m^t)\\
&=I_{q+1}\otimes(q^m a+q^{m+1}b)I_{q^m}\\
&=(q^m a^2+q^{m+1}b^2)I_{(q+1)q^m}.
\end{align*}
Therefore $X_m$ is an orthogonal design.

Take $X'=\begin{bmatrix}\J_1& \A_1 & \cdots & \A_1\end{bmatrix}$ as the submatrix of $X_1$ whose rows are in the first $q$ rows of $X_1^{(q)}$.
Then we calculate $(X')^t X'$ as follows:
\begin{align*}
(X')^t X' &=\begin{bmatrix}
\J_1^t\J_1 & \J_1^t\A_1 & \cdots & \J_1^t\A_1\\
\A_1^t\J_1 & \A_1^t\A_1 & \cdots & \A_1^t\A_1\\
\vdots & \vdots & \ddots & \vdots  \\
\A_1^t\J_1 & \A_1^t\A_1 & \cdots & \A_1^t\A_1
\end{bmatrix}\\
&=\begin{bmatrix}
qa^2J_q & ab J_q & \cdots & ab J_q \\
ab J_q & a^2 I_q +  b^2(qI_q-J_q) & \cdots & a^2 I_q +  b^2(qI_q-J_q) \\
\vdots & \vdots & \ddots & \vdots  \\
ab J_q & a^2 I_q +  b^2(qI_q-J_q) & \cdots & a^2 I_q +  b^2(qI_q-J_q)
\end{bmatrix}.
\end{align*}
Thus, $X_1$ is an unfaithful balancedly splittable orthogonal design.
\end{proof}

Next, we construct a complex orthogonal design COD$(q^m(q+1);q^m,q^{m+1})$.

Let $q$ be the order of a symmetric core $Q$ of a symmetric conference matrix $S$. Define the following matrices recursively for each nonnegative integer $m$.
\begin{align}
\label{eqn:CD}
\Cq_m & = \begin{cases}
aJ_1 & \text{if } m=0,\\
J_q \otimes \Dq_{m-1} & \text{if }m\geq1,
\end{cases}     &
\Dq_m & = \begin{cases}
bJ_1 & \text{if } m=0,\\
I_q \otimes \Cq_{m-1} + i \, Q \otimes \Dq_{m-1} & \text{if }m\geq1,
\end{cases}
\end{align}
where $a$ and $b$ are variables. We write $\C_m=\C_m^{(q)}$ and $\D_m=\D_m^{(q)}$.
Then it is similarly shown as the case $\J_m,\A_m$ that:
\begin{lemma}
The following hold.
\begin{enumerate}
\item $\C_{m} \D_{m}^* =  \D_{m}\C_{m}^*$.
\item $\C_m\C_m^* + q \D_m\D_m^* = (q^m a^2+q^{m+1}b^2)I_{q^m}$.
\item $\C_{1}^* \D_{1}=qa^2J_q,  \D_{1}^* \D_{1}=a^2 I_q +  b^2(qI_q-J_q), \D_{1}^*\C_{1}=\C_{1}^*\D_{1}=ab J_q$.
\end{enumerate}
\end{lemma}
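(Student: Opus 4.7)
The plan is to mimic the proof strategy of the preceding lemma on $\J_m,\A_m$: establish parts (i) and (ii) simultaneously by induction on $m$, then read off (iii) by direct calculation at $m=1$ using $\C_1=bJ_q$ and $\D_1=aI_q+ibQ$. The facts about the symmetric conference core I intend to use throughout are $Q^t=Q$ (so $Q^*=Q$ since $Q$ is real), $J_qQ=QJ_q=0$ (reflecting the zero row and column sums of $Q$, inherited from $S^2=qI_{q+1}$), and $Q^2=qI_q-J_q$.

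For the base case $m=0$, both $\C_0=aJ_1$ and $\D_0=bJ_1$ are scalars, so (i) is immediate and (ii) reduces to $a^2+qb^2=q^0a^2+q^1b^2$. For the inductive step of (i), I would expand
\[
\C_{m+1}\D_{m+1}^* = (J_q\otimes\D_m)(I_q\otimes\C_m^*-iQ\otimes\D_m^*) = J_q\otimes\D_m\C_m^* - iJ_qQ\otimes\D_m\D_m^*,
\]
\[
\D_{m+1}\C_{m+1}^* = (I_q\otimes\C_m+iQ\otimes\D_m)(J_q\otimes\D_m^*) = J_q\otimes\C_m\D_m^* + iQJ_q\otimes\D_m\D_m^*.
\]
The first terms on each line agree by the inductive hypothesis for (i), while the second terms vanish because $J_qQ=QJ_q=0$. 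The one subtlety specific to the complex setting is that $(iQ)^*=-iQ$, coming from $i^*=-i$; this sign is exactly what replaces the skew-symmetry $Q^t=-Q$ used in the real argument.

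The inductive step for (ii) runs in parallel to the real case. Expanding $\D_{m+1}\D_{m+1}^*$ produces two cross terms $-iQ\otimes\C_m\D_m^*$ and $+iQ\otimes\D_m\C_m^*$, which cancel by (i); the term $(iQ)(iQ)^*\otimes\D_m\D_m^* = Q^2\otimes\D_m\D_m^* = (qI_q-J_q)\otimes\D_m\D_m^*$ then combines with the $qJ_q\otimes\D_m\D_m^*$ piece coming from $\C_{m+1}\C_{m+1}^*$ so that the $J_q$ contributions cancel. What remains is $qI_q\otimes(\C_m\C_m^*+q\D_m\D_m^*)$, which closes the induction via the inductive hypothesis for (ii).

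Item (iii) is then a direct substitution at $m=1$, using $J_q^2=qJ_q$, $J_qQ=QJ_q=0$, and $Q^2=qI_q-J_q$. The main point to watch is purely bookkeeping: tracking the sign introduced by $i$ under conjugate transposition, and making sure that the two distinct uses of these identities (the cross-term cancellation in (i) and the $QQ^*$ combination in (ii)) are kept separate. There is no conceptual obstacle, because the role played by the skew-symmetry of $Q$ in the real proof is played in the complex case by the scalar $i$, with the anti-commutation being replaced by a plain sign flip from $i^*=-i$.
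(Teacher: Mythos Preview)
Your proposal is correct and follows exactly the approach the paper indicates: the paper does not give a separate proof for this lemma but simply says ``it is similarly shown as the case $\J_m,\A_m$,'' and your induction on $m$ for (i) and (ii) together with direct substitution for (iii) is precisely that analogous argument, with the role of skew-symmetry of $Q$ replaced by the sign flip $(iQ)^*=-iQ$ as you noted.
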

\begin{theorem}
Let $Y_m^{(q)}=i I_{q+1}\otimes \Cq_m+S\otimes \Dq_m$.
\begin{enumerate}
\item The matrix $Y_m^{(q)}$ is a complex orthogonal design COD$(q^m(q+1);q^m,q^{m+1})$.
\item $Y_1^{(q)}$ is an unfaithful balancedly splittable complex orthogonal design.
\end{enumerate}
\end{theorem}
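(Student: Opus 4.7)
The plan is to mirror the proof of the analogous real-case theorem, now leveraging the preceding lemma for $\C_m,\D_m$ (with conjugate transpose in place of transpose) and exploiting the fact that $S$ is now \emph{symmetric} rather than skew-symmetric, so that the cancellation of the cross terms in $Y_m Y_m^{\ast}$ comes from $S = S^{t}$ combined with $\C_m \D_m^{\ast} = \D_m \C_m^{\ast}$ rather than from $S + S^{t} = 0$.

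For part (i), I would expand
\[
Y_m Y_m^{\ast} = \bigl(iI_{q+1}\otimes \C_m + S\otimes \D_m\bigr)\bigl(-iI_{q+1}\otimes \C_m^{\ast} + S\otimes \D_m^{\ast}\bigr)
\]
by the Kronecker mixed-product rule. The ``straight'' terms contribute $I_{q+1}\otimes \C_m\C_m^{\ast}$ and $SS^{t}\otimes \D_m\D_m^{\ast} = qI_{q+1}\otimes \D_m\D_m^{\ast}$ (using the conference identity $SS^{t}=qI_{q+1}$), while the two cross terms merge into $iS\otimes(\C_m\D_m^{\ast}-\D_m\C_m^{\ast})$ and vanish by part (i) of the lemma. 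Part (ii) of the lemma then collapses the remaining sum to $(q^m a^2 + q^{m+1}b^2)I_{q^m(q+1)}$. A short induction on $m$ verifies that every entry of $Y_m$ lies in $\{0,\pm a,\pm ia,\pm b,\pm ib\}$, so $Y_m^{(q)}$ is a COD$(q^m(q+1);q^m,q^{m+1})$.

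For part (ii), I would let $Y'$ denote the submatrix consisting of the first $q$ rows of $Y_1^{(q)}$. Normalizing $S$ so that its first row is $(0,1,\ldots,1)$, we obtain $Y' = (i\C_1\ \D_1\ \cdots\ \D_1)$, and $(Y')^{\ast}Y'$ decomposes into a $(q+1)\times(q+1)$ array of $q\times q$ blocks which can be read off directly from part (iii) of the lemma:
\[
(1,1):\ qb^2 J_q,\qquad (1,r'),\,(r',1):\ \mp iab\, J_q,\qquad (r',r''):\ a^2 I_q + b^2(qI_q - J_q)
\]
for $r',r''>1$. The off-diagonal entries of $(Y')^{\ast}Y'$ are therefore $qb^2$, $a^2+(q-1)b^2$, $\pm iab$, and $-b^2$. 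The first two values lie in the set \eqref{eq:diag} (with $m=q$), while $\pm iab$ and $-b^2$ lie in the accompanying ``faithful'' set for a suitable choice of $\alpha,\beta\in\mathbb{H}$. Since both types of off-diagonal entry occur, $Y_1^{(q)}$ is unfaithful unstable balancedly splittable.

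The computation is routine once the lemma is in hand; the only place to be careful is the interplay between the prefactor $i$ multiplying $I_{q+1}\otimes \C_m$ and the symmetric (not skew-symmetric) $S$, which is precisely what makes the two cross terms cancel despite the reversed symmetry.
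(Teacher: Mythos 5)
Your proof is correct and takes essentially the same route as the paper: the same Kronecker-product expansion of $Y_mY_m^*$, with the cross terms merged into $iS\otimes(\C_m\D_m^*-\D_m\C_m^*)$ and killed by $S=S^t$, $SS^t=qI_{q+1}$ and part (i) of the lemma, followed by the same choice of submatrix $\begin{pmatrix} i\C_1 & \D_1 & \cdots & \D_1\end{pmatrix}$ analyzed blockwise via part (iii). Your $(1,1)$ block $qb^2J_q$ is in fact what the recursion $\C_1=bJ_q$ yields (the paper's $qa^2J_q$ reflects a typo in its lemma), and since either value lies in the set \eqref{eq:diag} with $m=q$, your explicit verification that both types of off-diagonal entries occur gives exactly the paper's conclusion.
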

\begin{proof}
Write $Y_m=Y_m^{(q)}$.
The matrix $Y_m$ has entries in $\{\pm a,\pm i b\}$ and
\begin{align*}
Y_m Y_m^*
&=I_{q+1}\otimes \C_m \C_m^*+iS^*\otimes \C_m\D_m^*-iS\otimes \D_m\C_m^*+SS^*\otimes \D_m\D_m^*\\
&=I_{q+1}\otimes (\C_m \C_m^*+q\D_m\D_m^*)\\
&=I_{q+1}\otimes(q^m a+q^{m+1}b)I_{q^m}\\
&=(q^m a^2+q^{m+1}b^2)I_{(q+1)q^m}.
\end{align*}
Therefore $Y_m$ is a complex orthogonal design.

Take $Y'=\begin{bmatrix}i\C_1& \D_1 & \cdots & \D_1\end{bmatrix}$ as the submatrix of $Y_1$ whose rows are in the first $q$ rows of $Y_1^{(q)}$.
Then we calculate $(Y')^t Y'$ as follows:
\begin{align*}
(Y')^* Y' &=\begin{bmatrix}
\C_1^*\C_1 & -i\C_1^*\D_1 & \cdots & -i\C_1^*\D_1\\
i\D_1^*\C_1 & \D_1^*\D_1 & \cdots & \D_1^*\D_1\\
\vdots & \vdots & \ddots & \vdots  \\
i\D_1^*\C_1 & \D_1^*\D_1 & \cdots & \D_1^*\D_1
\end{bmatrix}\\
&=\begin{bmatrix}
qa^2J_q & -iab J_q & \cdots & -iab J_q \\
iab J_q & a^2 I_q +  b^2(qI_q-J_q) & \cdots & a^2 I_q +  b^2(qI_q-J_q) \\
\vdots & \vdots & \ddots & \vdots  \\
iab J_q & a^2 I_q +  b^2(qI_q-J_q) & \cdots & a^2 I_q +  b^2(qI_q-J_q)
\end{bmatrix}.
\end{align*}
Thus, $Y_1$ is an unfaithful balancedly splittable complex orthogonal design.
\end{proof}

\section{Construction: faithful case}
\subsection{A recursive method}
\indent
It is the field's characteristic to use existing designs by replacing the variables with suitable plug-in matrices. By far, the most successful plug-in matrices are the circulant-type matrices. We will again find these helpful in our construction, and so we need the following modification of Barker sequences and Golay pairs.

A \emph{Barker sequence} is a sequence $C=(C_i)_{i=0}^{n-1} \subseteq \mathbb{H}^n$ such that the aperiodic autocorrelations satisfy $N_C(k) = \sum_{i=0}^{n-k-1}C_{i+k}C_i^* = 0$, for all $0 \leq k < n-1$. Note that we do not distinguish between this definition and the case when the $C_i$ are matrices over $\mathbb{H}$ satisfying the same conditions. We need one further idea.

Let $A = (A_i)_{i=0}^{n-1}$ and $B = (B_i)_{i=0}^{n-1}$ be two sequences such that $N_A(k) + N_B(k) = 0$, for every $0 \leq k < n-1$; then $A$ and $B$ are said to be a \emph{complementary Golay pair}. If $(A,B)$ denotes the concatination of the sequences $A$ and $B$, then it can be shown that $(A,B)$ and $(A,-B)$ are also a Golay pair, where $-B=(-B_i)_{i=0}^{n-1}$. If $C = (C_i)_{i=0}^{n-1}$ is some Barker sequence, then  $(C_0,C_1,\dots,C_{n-1},C_{n-1}, \dots, C_{1})$ and $(C_0,C_1,\dots,C_{n-1},-C_{n-1},\dots,-C_1)$ is a Golay pair. We are now ready to proceed with the construction.

Let $X_n$ be a QOD$(n;s_1,s_2,\dots,s_u)$, such that $\sum_{i=1}^u s_i = n$, and let $H_n$ be a quaternionic Hadamard matrix of order $n$.
Furthermore, take $H_{2n} = \left[\begin{smallmatrix} 1 & 1 \\ 1 & - \end{smallmatrix}\right] \otimes H_n$ and $X_{2n} = \left[\begin{smallmatrix} 1 & 1 \\ 1 & - \end{smallmatrix}\right] \otimes X_n$; and index the rows of $H_n$, $H_{2n}$, $X_n$, and $X_{2n}$ by $h_i$, $\hat{h}_j$, $r_i$, and $\hat{r}_j$, respectively, for $0 \leq i< n,0\leq j<2n$.

The so-called \emph{auxiliary matrices} $c_i = h_i^* h_i$ of $H_n$ were introduced in \cite{Khar} for the case of a real Hadamard matrix $H_n$ and satisfy the following.

\begin{lemma}
Let $H_n$ be a quaternion Hadamard matrix of order $n$, and let $c_i = h_i^* h_i$, where $h_i$ is the i$^\text{th}$ row of $H_n$. Then:
\begin{enumerate}
\item $\sum_{i=0}^{n-1} c_i = nI_n$, and
\item $c_ic_j^* = 0$ whenever $i \neq j$.
\end{enumerate}
\end{lemma}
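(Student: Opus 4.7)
The plan is to read $\sum_i c_i$ as a matrix product and to read $h_i h_j^*$ as an entry of $H_n H_n^*$, which reduces both statements to the defining identity $H_n H_n^* = n I_n$.

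For (i), I would observe that stacking the rows gives $\sum_{i=0}^{n-1} c_i = \sum_{i=0}^{n-1} h_i^* h_i = H_n^* H_n$. Then I need $H_n^* H_n = n I_n$. This is not literally the definition, but follows from it because $\frac{1}{n}H_n^*$ is a right inverse of $H_n$, and a right inverse of a square matrix over $\mathbb{H}$ is automatically a two-sided inverse (this can be seen either by passing through the standard $2n\times 2n$ complex embedding of a quaternionic matrix, or by invoking that $M_n(\mathbb{H})$ is a finite-dimensional $\mathbb{R}$-algebra, hence Dedekind-finite). Thus $H_n^* H_n = n I_n$ and part (i) is immediate.

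For (ii), I would first note that each $c_j$ is self-adjoint: $(h_j^* h_j)^* = h_j^* (h_j^*)^* = h_j^* h_j = c_j$, so $c_j^* = c_j$. Then I would compute
\[
c_i c_j^* \;=\; c_i c_j \;=\; h_i^*(h_i h_j^*) h_j.
\]
The crucial point is that $h_i h_j^*$ is a \emph{scalar} (a single quaternion), namely the $(i,j)$ entry of $H_n H_n^*$, which equals $n\delta_{ij}$. For $i\neq j$ this scalar is $0$, so the product collapses to the zero matrix regardless of the order in which one multiplies.

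The main obstacle, such as it is, is keeping track of the non-commutativity: one has to avoid ever commuting two quaternionic quantities of genuinely matrix shape, and one has to justify the passage from $H_n H_n^* = nI_n$ to $H_n^* H_n = nI_n$ without tacitly assuming commutativity. Both issues are handled cleanly by working at the level of matrix identities and only using scalar identities (like $h_i h_j^* = 0$) at the single spot where a scalar naturally appears.
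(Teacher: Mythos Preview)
Your argument is correct. The paper does not actually supply a proof of this lemma; it is stated without proof, with the auxiliary matrices attributed to \cite{Khar} in the real case, so there is no in-paper argument to compare against. Your proof is the natural one: the identity $\sum_i h_i^* h_i = H_n^* H_n$ together with Dedekind-finiteness of $M_n(\mathbb{H})$ handles (i), and reading $h_i h_j^*$ as the $(i,j)$ entry of $H_n H_n^*$ handles (ii). Your care with non-commutativity is appropriate and the justification that a one-sided inverse over $\mathbb{H}$ is two-sided (via the complex embedding or finite-dimensionality) is exactly what is needed.
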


\indent Following \cite{KharSuda1}, we may extend this result to include orthogonal designs by defining $C_i = h_i^* r_i$. 
Recall that $\sigma = \sum_{\ell=1}^u s_\ell x_\ell x_\ell^*$. 
\begin{lemma}
\begin{enumerate}
\item $\sum_{i=0}^{n-1} C_iC_i^* = \sigma nI_n$,
\item $C_iC_i^* = \sigma c_i$ for any $i$, and
\item $C_iC_j^* = 0$ whenever $i \neq j$.
\end{enumerate}
\end{lemma}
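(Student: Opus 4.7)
The plan is to recognise that each $C_i$ has rank one as an outer product $h_i^* r_i$, so any product $C_i C_j^*$ contracts to a single scalar $r_i r_j^*$ sandwiched between $h_i^*$ and $h_j$. Since $X_n X_n^* = \sigma I_n$ by hypothesis, this scalar is $\sigma \delta_{ij}$, and all three claims will drop out immediately, with the preceding lemma on the auxiliary matrices $c_i$ supplying the identity needed to pass from (ii) to (i).

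Concretely, I would start by writing
\[
C_i C_j^* = (h_i^* r_i)(h_j^* r_j)^* = h_i^*\, (r_i r_j^*)\, h_j,
\]
and identifying $r_i r_j^*$ as the $(i,j)$ entry of $X_n X_n^* = \sigma I_n$. For $i \neq j$ this scalar vanishes, so $C_i C_j^* = 0$, which is (iii). For $i = j$ the scalar is $\sigma$; since $\sigma = \sum_{\ell=1}^u s_\ell |x_\ell|^2$ is a real number and hence central in $\mathbb{H}$, I can pull it outside the product to obtain $C_i C_i^* = \sigma\, h_i^* h_i = \sigma c_i$, which is (ii). Then (i) follows at once by summing (ii) over $i$ and invoking $\sum_{i=0}^{n-1} c_i = nI_n$ from the previous lemma, giving $\sum_i C_i C_i^* = \sigma \sum_i c_i = \sigma n I_n$.

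I do not anticipate any real obstacle. The only place where quaternionic non-commutativity could conceivably cause trouble is when we slide $\sigma$ past $h_i^*$, but $\sigma$ is real and therefore lies in the centre of $\mathbb{H}$, so the reordering is free. The argument is consequently formally identical to what one would write over $\mathbb{R}$ or $\mathbb{C}$, and the whole proof is essentially a one-line expansion followed by an appeal to the two standing orthogonality relations $X_n X_n^* = \sigma I_n$ and $\sum_i c_i = nI_n$.
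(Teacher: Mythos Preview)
Your proof is correct and is exactly the intended argument: the paper omits the proof of this lemma, but the very same expansion $C_iC_j^* = h_i^*(r_ir_j^*)h_j$ with $r_ir_j^*$ read off from $X_nX_n^*=\sigma I_n$ is carried out explicitly for analogous products (e.g.\ $F_iC_j^*$, $GE_i^*$) in the proof of Theorem~\ref{thm:qod}. Your care in noting that $\sigma$ is real and hence central in $\mathbb{H}$ is the only quaternion-specific point, and you handle it properly.
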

Moreover, the sequence $(C_i)_{i=0}^{n-1}$ is a Barker sequence, so  the sequences $(C_0,C_1,\ldots,C_{n-1},$ $C_{n-1},\dots,C_1)$ and $(C_0,C_1,\dots,C_{n-1},-C_{n-1},\dots,-C_1)$ form a Golay pair. \\
\indent Define
\begin{align*}
A &= \text{circ}(C_0,C_1,\dots,C_{n-1},C_{n-1},\dots,C_1) \text{, and} \\
B &= \text{circ}(C_0,C_1,\dots,C_{n-1},-C_{n-1},\dots,-C_1).
\end{align*}
Take $E^* = [E_1^* \cdots E_{2n-1}^*]^*$ and $F = [F_1 \cdots F_{2n-1}]$, where $E_i = h_0^*\hat{r}_i$ and $F_i = \hat{h}_i^* r_0$. Finally, take $G = \hat{h}_0^*\hat{r}_0$, and define
\begin{equation}
X_{4n^2} =
\begin{bmatrix}\label{eq:X}
G & F & -F \\
E & A & B \\
-E & B & A
\end{bmatrix}.
\end{equation}
As a first result, we have the following.

\begin{theorem}\label{thm:qod}
Let $X_{4n^2}$ be as in \eqref{eq:X}. Then $X_{4n^2}$ is a QOD$(4n^2; 4ns_1,4ns_2,\dots,4ns_u)$.
\end{theorem}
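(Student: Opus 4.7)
The proof is a block multiplication. Expanding
\[
X_{4n^2}X_{4n^2}^* = \begin{bmatrix} G & F & -F \\ E & A & B \\ -E & B & A \end{bmatrix}\begin{bmatrix} G^* & E^* & -E^* \\ F^* & A^* & B^* \\ -F^* & B^* & A^* \end{bmatrix},
\]
the symmetries $\pm E,\pm F$ and the $(A,B)\leftrightarrow(B,A)$ swap collapse the nine resulting blocks into four master identities: (I)~$GG^*+2FF^* = 4n\sigma I_{2n}$; (II)~$EE^*+AA^*+BB^* = 4n\sigma I_{(2n-1)n}$; (III)~$GE^*+F(A^*-B^*) = 0$; (IV)~$AB^*+BA^* = EE^*$. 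Together with the routine verification that every entry of each block lies in the QOD set---which follows from quaternionic associativity, since left multiplication of $\pm\varepsilon_\ell x_\ell$ by a unit quaternion yields another such entry---these four identities prove the theorem.

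I would establish (I) first. The Kronecker structure of $H_{2n}$ and $X_{2n}$ gives $\hat h_0=(h_0,h_0)$ and $\hat r_0=(r_0,r_0)$, so $\hat r_0\hat r_0^* = 2r_0r_0^* = 2\sigma$, yielding $GG^* = 2\sigma\hat c_0$, where $\hat c_j = \hat h_j^*\hat h_j$. A parallel computation gives $FF^* = \sigma\sum_{i=1}^{2n-1}\hat c_i$. Adding and applying the identity $\sum_{j=0}^{2n-1}\hat c_j = 2nI_{2n}$ from the auxiliary matrix lemma applied to $H_{2n}$ produces (I). Identity (III) decouples: each summand vanishes independently. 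For $GE^*$, the expression $\hat r_0\hat r_i^*$ splits into two terms $\pm r_0 r_k^*$ (with $k$ a residue of $i$) that cancel or vanish by $X_nX_n^* = \sigma I_n$; thus $GE^*=0$. For $F(A^*-B^*)$, the blocks of $A-B$ sit only on the palindromic tail, carrying factors $C_k^*$ for $k\in\{1,\ldots,n-1\}$, so each term of the product $F(A^*-B^*)$ contains a vanishing factor $r_0r_k^*$.

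For (IV), I would compute $(AB^*+BA^*)_{ij}$ as a periodic cross-correlation at lag $\tau=j-i$ of the two palindromic sequences defining $A$ and $B$. The sign-flipped tail of $B$, combined with the Barker orthogonality $C_pC_q^* = \sigma\delta_{pq}c_p$, cancels every off-diagonal cross-term and leaves $2\sigma c_0$ on each diagonal block. This exactly matches $EE^*$, whose $(i,j)$-block $h_0^*\hat r_i\hat r_j^*h_0$ vanishes for $i\neq j$ (by the split-half structure of $\hat r_i$ together with $r_pr_q^* = \sigma\delta_{pq}$) and equals $2\sigma c_0$ on the diagonal. The same periodic-correlation computation shows $AA^*+BB^*$ is block-diagonal with $(i,i)$-block equal to $4n\sigma I_n - 2\sigma c_0$ (using $\sum_{k=0}^{n-1}c_k = nI_n$); adding $EE^*$ then yields (II).

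The main obstacle is bookkeeping rather than any deep idea: coordinating three auxiliary systems ($c_i=h_i^*h_i$ from $H_n$, $\hat c_j=\hat h_j^*\hat h_j$ from $H_{2n}$, and $C_i=h_i^*r_i$), the palindromic/reflected index scheme defining $A,B$, and the Kronecker decomposition of $H_{2n}$ and $X_{2n}$. Once the indices are aligned, every cancellation reduces to the auxiliary matrix lemma combined with the row orthogonalities $r_ir_j^* = \sigma\delta_{ij}$.
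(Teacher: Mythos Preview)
Your proposal is correct and follows essentially the same approach as the paper: both proofs reduce to the same block computations, relying on the auxiliary-matrix identities $\sum c_i=nI_n$, $C_iC_j^*=\sigma\delta_{ij}c_i$, and the row orthogonality of $X_{2n}$. Your packaging into the four master identities (I)--(IV) is a slightly cleaner organization of what the paper does block by block, but the underlying computations and the key cancellations are identical.
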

\begin{proof}
The proof follows by a tedious checking of the block entries of $X_{4n^2}X_{4n^2}^*$.

First, we show that the off-diagonal blocks of the product resolve to zero. We have that
\begin{align*}
GE_i^* = (\hat{h}_0^*\hat{r}_0)(h_0^*\hat{r}_i)^* = \hat{h}_0^*(\hat{r}_0\hat{r}_i^*)h_0 = 0,
\end{align*}
 so that $GE^* = 0$. Similarly, $EG^* = 0$. Then $F_iC_j^* = (\hat{h}_i^* r_0)(h_j^* r_j)^* = \hat{h}_i^*(r_0r_j^*)h_j = \delta_{j,0}\sigma\hat{h}_i^* h_j = \delta_{j,0}\sigma\hat{h}_i^* h_0$; whence,
 \begin{align}\label{eq:fa}
 FA^* = FB^* = \sigma F \otimes \hat{h}_i^* h_0
 \end{align}
 so that $FA^* - FB^* = AF^* - BF^* = 0$.

Now, we show that the product between the second and third block rows resolves to zero.
Observe that
\begin{align*}
E_iE_j^* = (h_0^*\hat{r}_i)(h_0^*\hat{r}_j)^* = h_0^*(\hat{r}_i\hat{r}_j^*)h_0= 2\sigma \delta_{i,j}  h_0^* h_0. 
\end{align*}
Hence, $EE^* = 2\sigma I_{2n-1} \otimes h_0^* h_0$. Next, we have that
\begin{align}
AB^* &= \sigma\text{circ}(h_0^* h_0, c_{n-1},-c_1,c_{n-2},\dots,c_{n-1}),\label{eq:ab}\\
BA^* &= \sigma\text{circ}(h_0^* h_0,-c_{n-1},c_1,-c_{n-2},\dots,-c_{n-1}),\label{eq:ba}
\end{align}
 so that $AB^* + BA^* = 2\sigma I_{2n-1} \otimes  h_0^* h_0$.
Thus, $-EE^* + AB^* + BA^* = 0$.

It remains to consider the block diagonal entries of the product. We have $GG^* = 2\sigma \hat{h}_0^* \hat{h}_0$ and
\begin{align*}
FF^* = \sum_{i=1}^{2n-1} F_iF_i^* = \sigma\left(\sum_{i=0}^{2n-1} \hat{h}_i^*\hat{h}_i - \hat{h}_0^*\hat{h}_0\right) = \sigma(2n I_{2n} -\hat{h}_0^*\hat{h}_0);
\end{align*}
 whence, $GG^* + 2FF^* = 4n\sigma I_{2n}$. Next, since $A$ and $B$ are complementary, it follows that
\begin{align*}
AA^* + BB^* = I_{2n-1} \otimes 2(C_0C_0^t + 2 \sum_{i=1}^{n-1}C_iC_i^*) = 2\sigma I_{2n-1} \otimes (2nI_n - h_0^* h_0).
\end{align*}
Therefore, $EE^* + AA^* + BB^* = 4n\sigma I_{2n^2-n}$.

This completes the proof  that $X_{4n^2}$ is a QOD$(4n^2;4ns_1,4ns_2,\dots,4ns_u)$.
\end{proof}

\indent We now illustrate the construction with several examples.
In the following examples, $-$ stands for $-1$, the overline of variable denotes the negation and upper asterisk denotes the conjugate of the variable.

To explain the above construction, we show all the steps one by one in following example.

\begin{example}

We begin with the OD$(2;1,1)$ given by $X_2 = [\begin{smallmatrix} a & b \\ b & \bar{a} \end{smallmatrix}]$, where $a$ and $b$ are commuting real indeterminants. If $H_2 = [\begin{smallmatrix} 1 & 1 \\ 1 & - \end{smallmatrix}]$, then form $H_4 = H_2 \otimes H_2$ and $X_4 = H_2 \otimes X_2$. Forming the auxiliary matrices of $X_2$, we then construct $A$ and $B$ as in the Theorem.
$$
\arraycolsep=1.0pt\def\arraystretch{1.0}
A=
\left[
\begin{array}{cc|cc|cc}
  a&  b&  b&  \bar{a}&  b&  \bar{a}\\
  a&  b&  \bar{b}&  a&  \bar{b}&  a\\ \hline
  b&  \bar{a}&  a&  b&  b&  \bar{a}\\
  \bar{b}&  a&  a&  b&  \bar{b}&  a\\ \hline
  b&  \bar{a}&  b&  \bar{a}&  a&  b\\
  \bar{b}&  a&  \bar{b}&  a&  a&  b
\end{array}
\right]
\qquad
B=
\left[
\begin{array}{cc|cc|cc}
  a&  b&  b&  \bar{a}&  \bar{b}&  a\\
  a&  b&  \bar{b}&  a&  b&  \bar{a}\\ \hline
  \bar{b}&  a&  a&  b&  b&  \bar{a}\\
  b&  \bar{a}&  a&  b&  \bar{b}&  a\\ \hline
  b&  \bar{a}&  \bar{b}&  a&  a&  b\\
  \bar{b}&  a&  b&  \bar{a}&  a&  b
\end{array}
\right]
$$
In a similar way, we construct the remaining block matrices.
$$
\arraycolsep=1.0pt\def\arraystretch{1.0}
G=
\left[
\begin{array}{cccc}
  a&  b&  a&  b\\
  a&  b&  a&  b\\
  a&  b&  a&  b\\
  a&  b&  a&  b
\end{array}
\right]
\qquad
E=
\left[
\begin{array}{cccc}
  b&  \bar{a}&  b&  \bar{a}\\
  b&  \bar{a}&  b&  \bar{a}\\ \hline
  a&  b&  \bar{a}&  \bar{b}\\
  a&  b&  \bar{a}&  \bar{b}\\ \hline
  b&  \bar{a}&  \bar{b}&  a\\
  b&  \bar{a}&  \bar{b}&  a
\end{array}
\right]
\qquad
F=
\left[
\begin{array}{cc|cc|cc}
  a&  b&  a&  b&  a&  b\\
  \bar{a}&  \bar{b}&  a&  b&  \bar{a}&  \bar{b}\\
  a&  b&  \bar{a}&  \bar{b}&  \bar{a}&  \bar{b}\\
  \bar{a}&  \bar{b}&  \bar{a}&  \bar{b}&  a&  b
\end{array}
\right]
$$
Putting these together as in \eqref{eq:X}, we obtain a balancedly splittable OD$(16;8,8)$.
$$
\arraycolsep=1.0pt\def\arraystretch{1.0}
\left[
\begin{array}{cccc|cccccc|cccccc}
  a&  b&  a&  b&  a&  b&  a&  b&  a&  b&  \bar{a}&  \bar{b}&  \bar{a}&  \bar{b}&  \bar{a}&  \bar{b}\\
  a&  b&  a&  b&  \bar{a}&  \bar{b}&  a&  b&  \bar{a}&  \bar{b}&  a&  b&  \bar{a}&  \bar{b}&  a&  b\\
  a&  b&  a&  b&  a&  b&  \bar{a}&  \bar{b}&  \bar{a}&  \bar{b}&  \bar{a}&  \bar{b}&  a&  b&  a&  b\\
  a&  b&  a&  b&  \bar{a}&  \bar{b}&  \bar{a}&  \bar{b}&  a&  b&  a&  b&  a&  b&  \bar{a}&  \bar{b}\\ \hline
  b&  \bar{a}&  b&  \bar{a}&  a&  b&  b&  \bar{a}&  b&  \bar{a}&  a&  b&  b&  \bar{a}&  \bar{b}&  a\\
  b&  \bar{a}&  b&  \bar{a}&  a&  b&  \bar{b}&  a&  \bar{b}&  a&  a&  b&  \bar{b}&  a&  b&  \bar{a}\\
  a&  b&  \bar{a}&  \bar{b}&  b&  \bar{a}&  a&  b&  b&  \bar{a}&  \bar{b}&  a&  a&  b&  b&  \bar{a}\\
  a&  b&  \bar{a}&  \bar{b}&  \bar{b}&  a&  a&  b&  \bar{b}&  a&  b&  \bar{a}&  a&  b&  \bar{b}&  a\\
  b&  \bar{a}&  \bar{b}&  a&  b&  \bar{a}&  b&  \bar{a}&  a&  b&  b&  \bar{a}&  \bar{b}&  a&  a&  b\\
  b&  \bar{a}&  \bar{b}&  a&  \bar{b}&  a&  \bar{b}&  a&  a&  b&  \bar{b}&  a&  b&  \bar{a}&  a&  b\\ \hline
  \bar{b}&  a&  \bar{b}&  a&  a&  b&  b&  \bar{a}&  \bar{b}&  a&  a&  b&  b&  \bar{a}&  b&  \bar{a}\\
  \bar{b}&  a&  \bar{b}&  a&  a&  b&  \bar{b}&  a&  b&  \bar{a}&  a&  b&  \bar{b}&  a&  \bar{b}&  a\\
  \bar{a}&  \bar{b}&  a&  b&  \bar{b}&  a&  a&  b&  b&  \bar{a}&  b&  \bar{a}&  a&  b&  b&  \bar{a}\\
  \bar{a}&  \bar{b}&  a&  b&  b&  \bar{a}&  a&  b&  \bar{b}&  a&  \bar{b}&  a&  a&  b&  \bar{b}&  a\\
  \bar{b}&  a&  b&  \bar{a}&  b&  \bar{a}&  \bar{b}&  a&  a&  b&  b&  \bar{a}&  b&  \bar{a}&  a&  b\\
  \bar{b}&  a&  b&  \bar{a}&  \bar{b}&  a&  b&  \bar{a}&  a&  b&  \bar{b}&  a&  \bar{b}&  a&  a&  b
\end{array}
\right]
$$

\end{example}

\begin{example}
Consider the COD$(2;1,1)$ given by
$$
\begin{bmatrix}
a & b \\
\bar{b}^* & a^*
\end{bmatrix}.
$$
Using the construction, we have the following COD$(16;8,8)$
$$
\arraycolsep=1.0pt\def\arraystretch{1.0} 
\left[
\begin{array}{cccc|cccccc|cccccc}
 a & b & a & b & a & b & a & b & a & b & \bar{a} & \bar{b} & \bar{a} & \bar{b} & \bar{a} & \bar{b} \\
 a & b & a & b & \bar{a} & \bar{b} & a & b & \bar{a} & \bar{b} & a & b & \bar{a} & \bar{b} & a & b \\
 a & b & a & b & a & b & \bar{a} & \bar{b} & \bar{a} & \bar{b} & \bar{a} & \bar{b} & a & b & a & b \\
 a & b & a & b & \bar{a} & \bar{b} & \bar{a} & \bar{b} & a & b & a & b & a & b & \bar{a} & \bar{b} \\ \hline
 \bar{b}^* & a^* & \bar{b}^* & a^* & a & b & \bar{b}^* & a^* & \bar{b}^* & a^* & a & b & \bar{b}^* & a^* & b^* & \bar{a}^* \\
 \bar{b}^* & a^* & \bar{b}^* & a^* & a & b & b^* & \bar{a}^* & b^* & \bar{a}^* & a & b & b^* & \bar{a}^* & \bar{b}^* & a^* \\
 a & b & \bar{a} & \bar{b} & \bar{b}^* & a^* & a & b & \bar{b}^* & a^* & b^* & \bar{a}^* & a & b & \bar{b}^* & a^* \\
 a & b & \bar{a} & \bar{b} & b^* & \bar{a}^* & a & b & b^* & \bar{a}^* & \bar{b}^* & a^* & a & b & b^* & \bar{a}^* \\
 \bar{b}^* & a^* & b^* & \bar{a}^* & \bar{b}^* & a^* & \bar{b}^* & a^* & a & b & \bar{b}^* & a^* & b^* & \bar{a}^* & a & b \\
 \bar{b}^* & a^* & b^* & \bar{a}^* & b^* & \bar{a}^* & b^* & \bar{a}^* & a & b & b^* & \bar{a}^* & \bar{b}^* & a^* & a & b \\ \hline
 b^* & \bar{a}^* & b^* & \bar{a}^* & a & b & \bar{b}^* & a^* & b^* & \bar{a}^* & a & b & \bar{b}^* & a^* & \bar{b}^* & a^* \\
 b^* & \bar{a}^* & b^* & \bar{a}^* & a & b & b^* & \bar{a}^* & \bar{b}^* & a^* & a & b & b^* & \bar{a}^* & b^* & \bar{a}^* \\
 \bar{a} & \bar{b} & a & b & b^* & \bar{a}^* & a & b & \bar{b}^* & a^* & \bar{b}^* & a^* & a & b & \bar{b}^* & a^* \\
 \bar{a} & \bar{b} & a & b & \bar{b}^* & a^* & a & b & b^* & \bar{a}^* & b^* & \bar{a}^* & a & b & b^* & \bar{a}^* \\
 b^* & \bar{a}^* & \bar{b}^* & a^* & \bar{b}^* & a^* & b^* & \bar{a}^* & a & b & \bar{b}^* & a^* & \bar{b}^* & a^* & a & b \\
 b^* & \bar{a}^* & \bar{b}^* & a^* & b^* & \bar{a}^* & \bar{b}^* & a^* & a & b & b^* & \bar{a}^* & b^* & \bar{a}^* & a & b \\
\end{array}
\right].
$$
\end{example}

\begin{example}
The following is a QOD$(2;1,1)$
$$
\begin{bmatrix}
\bar{a} & bi \\
\bar{b}j & ak
\end{bmatrix},
$$
where $a$ and $b$ are real variables. We then arrive at the following QOD$(16;8,8)$
\[
\arraycolsep=1.0pt\def\arraystretch{1.0}
\left[
\begin{array}{cccc|cccccc|cccccc}
\bar{a}&bi&\bar{a}&bi&\bar{a}&bi&\bar{a}&bi&\bar{a}&bi& a&\bar{b}i& a&\bar{b}i& a&\bar{b}i\\
\bar{a}&bi&\bar{a}&bi& a&\bar{b}i&\bar{a}&bi& a&\bar{b}i&\bar{a}&bi& a&\bar{b}i&\bar{a}&bi\\
\bar{a}&bi&\bar{a}&bi&\bar{a}&bi& a&\bar{b}i& a&\bar{b}i& a&\bar{b}i&\bar{a}&bi&\bar{a}&bi\\
\bar{a}&bi&\bar{a}&bi& a&\bar{b}i& a&\bar{b}i&\bar{a}&bi&\bar{a}&bi&\bar{a}&bi& a&\bar{b}i\\\hline
\bar{b}j&ak&\bar{b}j&ak&\bar{a}&bi&\bar{b}j&ak&\bar{b}j&ak&\bar{a}&bi&\bar{b}j&ak&bj&\bar{a}k\\
\bar{b}j&ak&\bar{b}j&ak&\bar{a}&bi&bj&\bar{a}k&bj&\bar{a}k&\bar{a}&bi&bj&\bar{a}k&\bar{b}j&ak\\
\bar{a}&bi& a&\bar{b}i&\bar{b}j&ak&\bar{a}&bi&\bar{b}j&ak&bj&\bar{a}k&\bar{a}&bi&\bar{b}j&ak\\
\bar{a}&bi& a&\bar{b}i&bj&\bar{a}k&\bar{a}&bi&bj&\bar{a}k&\bar{b}j&ak&\bar{a}&bi&bj&\bar{a}k\\
\bar{b}j&ak&bj&\bar{a}k&\bar{b}j&ak&\bar{b}j&ak&\bar{a}&bi&\bar{b}j&ak&bj&\bar{a}k&\bar{a}&bi\\
\bar{b}j&ak&bj&\bar{a}k&bj&\bar{a}k&bj&\bar{a}k&\bar{a}&bi&bj&\bar{a}k&\bar{b}j&ak&\bar{a}&bi\\\hline
bj&\bar{a}k&bj&\bar{a}k&\bar{a}&bi&\bar{b}j&ak&bj&\bar{a}k&\bar{a}&bi&\bar{b}j&ak&\bar{b}j&ak\\
bj&\bar{a}k&bj&\bar{a}k&\bar{a}&bi&bj&\bar{a}k&\bar{b}j&ak&\bar{a}&bi&bj&\bar{a}k&bj&\bar{a}k\\
 a&\bar{b}i&\bar{a}&bi&bj&\bar{a}k&\bar{a}&bi&\bar{b}j&ak&\bar{b}j&ak&\bar{a}&bi&\bar{b}j&ak\\
 a&\bar{b}i&\bar{a}&bi&\bar{b}j&ak&\bar{a}&bi&bj&\bar{a}k&bj&\bar{a}k&\bar{a}&bi&bj&\bar{a}k\\
bj&\bar{a}k&\bar{b}j&ak&\bar{b}j&ak&bj&\bar{a}k&\bar{a}&bi&\bar{b}j&ak&\bar{b}j&ak&\bar{a}&bi\\
bj&\bar{a}k&\bar{b}j&ak&bj&\bar{a}k&\bar{b}j&ak&\bar{a}&bi&bj&\bar{a}k&bj&\bar{a}k&\bar{a}&bi\\
\end{array}
\right].
\]

\end{example}

Two quaternion matrices $A$ and $B$ are said to be \emph{amicable} if $AB^*=BA^*$. If $A$ and $B$ are amicable quaternionic orthogonal designs of order $n$ and types $(s_1,s_2,\dots,s_u)$ and $(t_1,t_2,\dots,t_v)$, respectively, then we say that $(A,B)$ is an \emph{amicable quarternionic orthogonal design}, and we write $(A,B)$ is an \\AQOD$(n;(s_1,s_2,\dots,$ $s_u);(t_1,t_2,\dots,t_v))$. It is an interesting  consequence of the construction that amicability is preserved.

\begin{corollary}
If $(X_n,Y_n)$ is an AQOD$(n;(s_1,s_2,\dots,s_u);(t_1,t_2,\dots,t_v))$, and if $X_{4n^2}$ and $Y_{4n^2}$ are given by \eqref{eq:X}, then $(X_{4n^2},Y_{4n^2})$ is an \\ AQOD$(4n^2;(4ns_1,4ns_2,\dots,4ns_u);(4nt_1,4nt_2,\dots,4nt_v))$.
\end{corollary}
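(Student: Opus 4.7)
The plan is to verify the amicability $X_{4n^2}Y_{4n^2}^* = Y_{4n^2}X_{4n^2}^*$; Theorem \ref{thm:qod} applied separately to $X_n$ and $Y_n$ already yields that $X_{4n^2}$ and $Y_{4n^2}$ are quaternionic orthogonal designs of the stated types, so only the amicability condition remains to be established.

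First, I would lift the amicability from the level of $X_n,Y_n$ up to the auxiliary matrices. The hypothesis $X_nY_n^* = Y_nX_n^*$ is equivalent, entry by entry, to the pairwise scalar identity $r_i^X(r_j^Y)^* = r_i^Y(r_j^X)^*$ for all row indices. Since $X_{2n}$ and $Y_{2n}$ are both built by tensoring with the same $H_2$,
\[
X_{2n}Y_{2n}^* = H_2H_2^t\otimes X_nY_n^* = 2I_2\otimes Y_nX_n^* = Y_{2n}X_{2n}^*,
\]
giving the analogous identity $\hat r_i^X(\hat r_j^Y)^* = \hat r_i^Y(\hat r_j^X)^*$ at the $2n$-level. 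Sandwiching these scalar identities between the fixed rows of the common $H_n$ and $H_{2n}$ produces the matrix-level amicabilities $C_i^X(C_j^Y)^* = C_i^Y(C_j^X)^*$, $E_i^X(E_j^Y)^* = E_i^Y(E_j^X)^*$, $F_i^X(F_j^Y)^* = F_i^Y(F_j^X)^*$, $G^X(G^Y)^* = G^Y(G^X)^*$, together with the cross pairings $G^X(E_i^Y)^* = G^Y(E_i^X)^*$ and $F_i^X(C_j^Y)^* = F_i^Y(C_j^X)^*$ for every mixed pair whose dimensions are compatible.

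Second, I would expand the nine blocks of $X_{4n^2}Y_{4n^2}^*$ following the bookkeeping of the proof of Theorem \ref{thm:qod}. Every intermediate identity in that argument depended only on the orthogonality of the rows of $H_n$ and $H_{2n}$, on the circulant shift structure of $A$ and $B$, and on the Golay complementarity of the pair; none of these features requires $X_n$ and $Y_n$ to coincide. Each block of $X_{4n^2}Y_{4n^2}^*$ therefore simplifies to the same expression in auxiliary products as in the symmetric case, but with $X$-auxiliaries in the first slot and $Y$-auxiliaries in the second. Applying the auxiliary amicabilities from the first step termwise swaps $X\leftrightarrow Y$ in every summand and yields the matching block of $Y_{4n^2}X_{4n^2}^*$.

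The main bookkeeping obstacle is the Golay piece: one must show that $A^X(B^Y)^* + B^X(A^Y)^*$ and $A^X(A^Y)^* + B^X(B^Y)^*$ are invariant under the swap $X\leftrightarrow Y$. Expanding these circulants as sums $\sum_k \pm\, C_k^X(C_{k+\ell}^Y)^*$ with the sign pattern inherited from the Golay structure, and then applying the identity $C_k^X(C_{k+\ell}^Y)^* = C_k^Y(C_{k+\ell}^X)^*$ summand-by-summand, recovers the same circulant sums with $X$ and $Y$ interchanged. The remaining off-diagonal blocks collapse by direct substitution of the $G^X(E^Y)^*$ and $F^X(A^Y-B^Y)^*$ identities, completing the amicability check.
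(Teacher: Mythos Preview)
The paper states this corollary without proof, so there is nothing to compare against directly; your direct verification is the natural argument the authors presumably have in mind, and it is essentially correct.

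One point needs correcting. In your second paragraph you assert that the intermediate identities in the proof of Theorem~\ref{thm:qod} depend only on the orthogonality of the rows of $H_n$ and $H_{2n}$, the circulant structure, and Golay complementarity. That is not accurate: several of those simplifications (for instance $GE_i^*=0$ and $C_iC_j^*=0$ for $i\neq j$) rely on the orthogonality of the rows of $X_n$ itself, via $\hat r_0\hat r_i^*=0$ and $r_ir_j^*=0$. Once the two factors come from different designs $X_n$ and $Y_n$, those cross-products $r_i^X(r_j^Y)^*$ are entries of $X_nY_n^*$ and need not vanish, so the blocks of $X_{4n^2}Y_{4n^2}^*$ do \emph{not} simplify as in Theorem~\ref{thm:qod}. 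Fortunately your argument does not actually need any simplification: the raw block expansion already expresses every block of $X_{4n^2}Y_{4n^2}^*$ as a finite sum of products of the form $D^X(D'^{\,Y})^*$ with $D,D'\in\{G,E_i,F_i,C_i\}$, and your first step shows each such product equals $D^Y(D'^{\,X})^*$. That termwise swap is all that is required, and for the same reason the Golay complementarity you invoke in the third paragraph is irrelevant here---it is needed for Theorem~\ref{thm:qod}, not for amicability. So the proof stands; just drop the misleading claim about what Theorem~\ref{thm:qod} depends on and go straight from the auxiliary amicabilities to the block-by-block swap.
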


\subsection{Balancedly splitted matrices  }
We next show that the constructed designs are balancedly splittable.

\begin{theorem}\label{thm:bqod}
Let $X_{4n^2}$ be as in \eqref{eq:X}, then it is balancedly splittable. In particular:
\begin{enumerate}
\item The submatrices $\left[ \begin{smallmatrix} E & A & B \end{smallmatrix} \right]$ and $\left[ \begin{smallmatrix} -E & B & A \end{smallmatrix} \right]$ induce an unstable split with $\alpha = -\beta = n$.
\item The submatrices $\left[ \begin{smallmatrix} F^* & A^* & B^* \end{smallmatrix} \right]^*$ and $\left[ \begin{smallmatrix} -F^* & B^* & A^* \end{smallmatrix} \right]^*$ induce a stable split with $\alpha = -\beta = 1$.
\end{enumerate}
\end{theorem}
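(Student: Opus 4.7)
The plan is to compute $X_1^*X_1$ (respectively $X_1X_1^*$) for each of the two choices of $X_1$ by expanding into a $3{\times}3$ block structure and verifying the claimed pattern of off-diagonal entries. The tools at our disposal are the Hadamard orthogonalities $h_ih_j^*=n\delta_{ij}$ and $\hat h_i\hat h_j^*=2n\delta_{ij}$; the design orthogonalities $r_ir_j^*=\sigma\delta_{ij}$ and $X_{2n}X_{2n}^*=2\sigma I_{2n}$; the auxiliary identities $C_iC_j^*=\sigma\delta_{ij}c_i$ and, by transposition, $C_i^*C_j=n\delta_{ij}r_j^*r_j$; and the Golay-pair cancellation built into $A$ and $B$.

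For part (i) I set $X_1=[E\ A\ B]$ and expand $X_1^*X_1$ into nine blocks. The diagonal block $E^*E=n\sum_{i=1}^{2n-1}\hat r_i^*\hat r_i=n(2\sigma I_{2n}-\hat r_0^*\hat r_0)$ has off-diagonal entries $\pm n$ times a variable monomial. The blocks $A^*A$ and $B^*B$ are block-circulant; writing the $p$-th block as $\sum_uD_u^*D_{(u+p)\bmod(2n-1)}$ and applying $C_i^*C_j=n\delta_{ij}r_j^*r_j$, one sees that for $p=0$ the block reduces to $n(2\sigma I_n-r_0^*r_0)$, and for $p\ne0$ exactly one index $u$, determined by the congruence $2u+p\equiv 0\pmod{2n-1}$ together with the ``side'' structure of the Barker-Golay sequence, contributes a single term $\pm n\,r_l^*r_l$. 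The mixed blocks $E^*A$ and $E^*B$ reduce via $h_0D_m=n\delta_{m,0}r_0$ to $n\,\hat r_q^*r_0$ at each block position, and $A^*B$ is handled by the same circulant analysis with the sign bookkeeping demanded by the negation of the second half of $B$. In every entry one recovers $\pm n$ times a monomial in the variables, establishing the unstable split with $\alpha=-\beta=n$.

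For part (ii), I read $[F^*\ A^*\ B^*]^*$ as the column submatrix $X_1=\left[\begin{smallmatrix}F\\A\\B\end{smallmatrix}\right]$ of $X_{4n^2}$ and compute $X_1X_1^*$, whose nine blocks are precisely the ones appearing in the proof of Theorem~\ref{thm:qod}. Specifically, $FF^*=\sigma(2nI_{2n}-\hat h_0^*\hat h_0)$ and $FA^*=FB^*$ has entries of the form $\pm\varepsilon\sigma$. The block-circulant products $AA^*,BB^*,AB^*,BA^*$ are evaluated by the same congruence argument used in part (i): the diagonal blocks are $\sigma(2nI_n-h_0^*h_0)$ or $\sigma c_0$, and each off-diagonal block collapses to $\pm\sigma c_l$ for a single surviving $l$. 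Since every $c_l=h_l^*h_l$ has entries in $\{\pm1,\pm i,\pm j,\pm k\}$, every off-diagonal entry of $X_1X_1^*$ is $\pm\varepsilon\sigma$, giving the stable split with $\alpha=-\beta=1$. The main obstacle throughout is the careful index bookkeeping for the block-circulant products, particularly the wrap-around modulo $2n-1$; the Golay-pair property of $(A,B)$ is precisely the mechanism that guarantees that, apart from one surviving term in each off-diagonal block, everything cancels.
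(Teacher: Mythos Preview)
Your proposal is correct and follows essentially the same route as the paper: both proofs expand $H^*H$ (for (i)) and $VV^*$ (for (ii)) into $3\times 3$ block Gram matrices and evaluate each block using the auxiliary identities $C_i^*C_j=n\delta_{ij}r_i^*r_i$, $C_iC_j^*=\sigma\delta_{ij}c_i$, and the explicit forms of $E^*E$, $FF^*$, $FA^*$, etc.\ already obtained in the proof of Theorem~\ref{thm:qod}. Your congruence $2u+p\equiv 0\pmod{2n-1}$ for locating the single surviving term in each off-diagonal circulant block is exactly what underlies the paper's explicit circulant formula \eqref{eq:aa}. One small expository slip: the cancellation in the individual blocks $A^*A$, $AA^*$, $AB^*$, etc.\ comes from the orthogonality $C_iC_j^*=0$ for $i\neq j$ (the Barker/auxiliary-matrix property), not from the Golay-pair identity, which only controls the \emph{sum} $AA^*+BB^*$; but this does not affect the validity of your argument.
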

\begin{proof}
The proof proceeds as in Theorem~\ref{thm:qod}. Note that it suffices to prove the result for $H = \left[ \begin{smallmatrix} E & A & B \end{smallmatrix} \right]$ and $V^* = \left[ \begin{smallmatrix} F^* & A^* & B^* \end{smallmatrix} \right]^*$.

Towards proving (i), define $$S = \{\pm\varepsilon n x_\ell x_{\ell'}, \pm\varepsilon n x_\ell x_{\ell'}^*, \pm\varepsilon n x_{\ell}^*x_{\ell'}^* \mid 1 \leq \ell,\ell' \leq u,\varepsilon\in\{1, i, j, k\}\}.$$
First note that
\begin{align}\label{eq:hh}
H^*H=\begin{bmatrix}
E^*E & E^* A & E^*B \\
A^*E & A^* A & A^*B \\
B^*E & B^* A & B^*B
\end{bmatrix}.
\end{align}

We have that
\begin{align}\label{eq:ee}
E^* E = \sum_{i=1}^{2n-1} (h_0^* \hat{r}_i)^*(h_0^t\hat{r}_i) = n\sum_{i=1}^{2n-1}\hat{r}_i^*\hat{r}_i = n(2\sigma I_{2n} - \hat{r}_0^*\hat{r}_0).
\end{align}
 The off-diagonal entries of $E^*E$ are then from the set $S$, which is sufficient. Now,
\begin{align*}
E_i^* C_j = (h_0^*\hat{r}_i)^*(h_j^* r_j) = n \delta_{j,0}\hat{r}_i^* r_j;
\end{align*}
whence, the entries of $E^* A$, $A^* E$, $E^* B$, and $B^* E$ are in $S$.
Further,
\begin{align}\label{eq:aa}
A^*A = n\text{circ}(2\sigma I_n - r_0^* r_0,r_{n-1}^* r_{n-1},r_1^* r_1,r_{n-2}^* r_{n-2},\dots,r_{n-1}^* r_{n-1}),
\end{align}
 and $B^* B$ is similar.
Thus, the off-diagonal entries of $A^* A$ and $B^* B$ are also in $S$.
The cases $A^*B$ and $B^*A$ are handled analogously.
It now follows that $H^* H$ has all off-diagonal entries in $S$. This completes part (i).

Over the course of proving Theorem~\ref{thm:qod}, it follows that
$$
VV^*=\begin{bmatrix}
\overline{F}F^t & \overline{F}A^t & \overline{F}B^t \\
\overline{A}F^t & \overline{A}A^t & \overline{A}B^t \\
\overline{B}F^t & \overline{B}A^t & \overline{B}B^t
\end{bmatrix}
=\begin{bmatrix}
\sigma(2n I_{2n} -J_{2n})
 & \sigma F \otimes \hat{h}_i^t h_0 & \sigma F \otimes \hat{h}_i^t h_0 \\
\sigma (F \otimes \hat{h}_i^t h_0)^t & \overline{A}A^t & \overline{A}B^t \\
\sigma (F \otimes \hat{h}_i^t h_0)^t & \overline{B}A^t & \overline{B}B^t
\end{bmatrix},
$$
where letting $P=\text{circ}(010\cdots0)$,
\begin{align*}
\overline{A}A^*&=\sigma(2nI_{(2n-1)n}-I_{2n-1}\otimes J_n+\sum_{i=1}^{n-1}(P^{2i}+P^{-2i})\otimes \overline{c_i}),\\
\overline{B}B^*&=\sigma(2nI_{(2n-1)n}-I_{2n-1}\otimes J_n-\sum_{i=1}^{n-1}(P^{2i}+P^{-2i})\otimes \overline{c_i}).
\end{align*}
In view of \eqref{eq:ab} and \eqref{eq:ba}, the off-diagonal entries of $VV^*$ are shown to be $\pm\sigma$. Part (ii), therefore, has been shown to be true.
\end{proof}
\begin{remark}
\begin{enumerate}
\item If the complex variables $x_j$ run over the set $\{c\in\mathbb{C} \mid |c|=1\}$, then the matrix $(1/(2n^2-n))H^*H$ is the Gram matrix of a quaternion equiangular tight frame whose off diagonal entries belong to the set
$$
\left\{\frac{\pm\varepsilon x_\ell x_{\ell'}}{2n-1},\frac{\pm\varepsilon x_\ell x_{\ell'}^*}{2n-1},\frac{\pm\varepsilon x_{\ell}^*x_{\ell'}^*}{2n-1} \mid 1 \leq \ell,\ell' \leq u,\varepsilon\in\{1,i,j,k\}\right\}.
$$
Thus, we obtain infinitely many quaternion ETFs parameterized by $u$ variables.
\item If the quaternion variables $x_j$ run over the set $\{h\in\mathbb{H} \mid |h|=1\}$, then matrix $(1/(2n^2-n))VV^*$ is the Gram matrix of a quaternion equiangular tight frame whose off diagonal entries belong to the set
$$
\left\{\pm\frac{\varepsilon \sigma}{2n-1} \mid \varepsilon\in\{1,i,j,k\}\right\}.
$$
Thus, we obtain infinitely many quaternion ETFs parameterized by $u$ variables, but all isomorphic.
\end{enumerate}
\end{remark}

\begin{example}

Continuing the above example for the splittable QOD(16;8,8), and using the first vertical frame $V^* = \left[\begin{smallmatrix} F^* & A^* & B^* \end{smallmatrix}\right]^*$, we have that $(1/\sigma)VV^*$ is given by \\
\[
\arraycolsep=1.0pt\def\arraystretch{1.0}
\left[
\begin{array}{cccccccccccccccc}
3 & - & - & - & 1 & 1 & 1 & 1 & 1 & 1 & 1 & 1 & 1 & 1 & 1 & 1 \\
- & 3 & - & - & - & - & 1 & 1 & - & - & - & - & 1 & 1 & - & - \\
- & - & 3 & - & 1 & 1 & - & - & - & - & 1 & 1 & - & - & - & - \\
- & - & - & 3 & - & - & - & - & 1 & 1 & - & - & - & - & 1 & 1 \\
1 & - & 1 & - & 3 & - & 1 & - & 1 & - & 1 & 1 & 1 & - & - & 1 \\
1 & - & 1 & - & - & 3 & - & 1 & - & 1 & 1 & 1 & - & 1 & 1 & - \\
1 & 1 & - & - & 1 & - & 3 & - & 1 & - & - & 1 & 1 & 1 & 1 & - \\
1 & 1 & - & - & - & 1 & - & 3 & - & 1 & 1 & - & 1 & 1 & - & 1 \\
1 & - & - & 1 & 1 & - & 1 & - & 3 & - & 1 & - & - & 1 & 1 & 1 \\
1 & - & - & 1 & - & 1 & - & 1 & - & 3 & - & 1 & 1 & - & 1 & 1 \\
1 & - & 1 & - & 1 & 1 & - & 1 & 1 & - & 3 & - & - & 1 & - & 1 \\
1 & - & 1 & - & 1 & 1 & 1 & - & - & 1 & - & 3 & 1 & - & 1 & - \\
1 & 1 & - & - & 1 & - & 1 & 1 & - & 1 & - & 1 & 3 & - & - & 1 \\
1 & 1 & - & - & - & 1 & 1 & 1 & 1 & - & 1 & - & - & 3 & 1 & - \\
1 & - & - & 1 & - & 1 & 1 & - & 1 & 1 & - & 1 & - & 1 & 3 & - \\
1 & - & - & 1 & 1 & - & - & 1 & 1 & 1 & 1 & - & 1 & - & - & 3
\end{array}
\right].
\]
Similarly, using the first horizontal frame $H = \left[\begin{smallmatrix} E & A & B \end{smallmatrix}\right]$, we have that $(1/n)H^*H$ is given by
\begin{scriptsize}
\[
\arraycolsep=1.0pt\def\arraystretch{1.0}
\left[
\begin{array}{cccccccccccccccc}
c&abi&-a^2&abi&-abj&-b^2k&a^2&-abi&-abj&-b^2k&-abj&-b^2k&a^2&-abi&-abj&-b^2k\\
-abi&d&-abi&-b^2&a^2k&-abj&abi&b^2&a^2k&-abj&a^2k&-abj&abi&b^2&a^2k&-abj\\
-a^2&abi&c&abi&-abj&-b^2k&-a^2&abi&abj&b^2k&-abj&-b^2k&-a^2&abi&abj&b^2k\\
-abi&-b^2&-abi&d&a^2k&-abj&-abi&-b^2&-a^2k&abj&a^2k&-abj&-abi&-b^2&-a^2k&abj\\
abj&-a^2k&abj&-a^2k&c&abi&b^2&abi&b^2&abi&a^2&-abi&-b^2&-abi&b^2&abi\\
b^2k&abj&b^2k&abj&-abi&d&-abi&a^2&-abi&a^2&abi&b^2&abi&-a^2&-abi&a^2\\
a^2&-abi&-a^2&abi&b^2&abi&c&abi&b^2&abi&b^2&abi&a^2&-abi&-b^2&-abi\\
abi&b^2&-abi&-b^2&-abi&a^2&-abi&d&-abi&a^2&-abi&a^2&abi&b^2&abi&-a^2\\
abj&-a^2k&-abj&a^2k&b^2&abi&b^2&abi&c&abi&-b^2&-abi&b^2&abi&a^2&-abi\\
b^2k&abj&-b^2k&-abj&-abi&a^2&-abi&a^2&-abi&d&abi&-a^2&-abi&a^2&abi&b^2\\
abj&-a^2k&abj&-a^2k&a^2&-abi&b^2&abi&-b^2&-abi&c&abi&-b^2&-abi&-b^2&-abi\\
b^2k&abj&b^2k&abj&abi&b^2&-abi&a^2&abi&-a^2&-abi&d& abi&-a^2&abi&-a^2\\
a^2&-abi&-a^2&abi&-b^2&-abi&a^2&-abi&b^2&abi&-b^2&-abi&c&abi&-b^2&-abi\\
abi&b^2&-abi&-b^2&abi&-a^2&abi&b^2&-abi&a^2&abi&-a^2&-abi&d&abi&-a^2\\
abj&-a^2k&-abj&a^2k&b^2&abi&-b^2&-abi&a^2&-abi&-b^2&-abi&-b^2&-abi&c&abi\\
b^2k&abj&-b^2k&-abj&-abi&a^2&abi&-a^2&abi&b^2&abi&-a^2&abi&-a^2&-abi&d
\end{array}
\right],
\]
\end{scriptsize}
where $c=a^2+2b^2$ and $d=2a^2+b^2$.
\end{example}

\begin{remark} The cases where no variable is shown indicate an orthogonal design in one variable. Any kind of Hadamard matrix can be thought of as an orthogonal design in one variable.  
\end{remark}

\begin{example}
Let $\zeta = e^{\frac{2\pi i}{3}}$, and consider the Butson Hadamard matrix B$(3,3)$ 

$$
H=
\begin{bmatrix}
1 & 1 & 1 \\
1 & \zeta & \zeta^2 \\
1 & \zeta^2 & \zeta
\end{bmatrix}.
$$
Using the logarithm matrix of H

$$
\begin{bmatrix}
0 & 0 & 0 \\
0 & 1 & 2 \\
0 & 2 & 1
\end{bmatrix}
$$
in the construction and obtain the logarithm matrix of a BG$(36,6)$ given by
\[
\arraycolsep=1.0pt\def\arraystretch{1.0}
\left[
\begin{array}{cccccc|ccccccccccccccc|ccccccccccccccc}
0&0&0&0&0&0&0&0&0&0&0&0&0&0&0&0&0&0&0&0&0&\bar{0}&\bar{0}&\bar{0}&\bar{0}&\bar{0}&\bar{0}&\bar{0}&\bar{0}&\bar{0}&\bar{0}&\bar{0}&\bar{0}&\bar{0}&\bar{0}&\bar{0}\\
0&0&0&0&0&0&2&2&2&1&1&1&0&0&0&2&2&2&1&1&1&\bar{2}&\bar{2}&\bar{2}&\bar{1}&\bar{1}&\bar{1}&\bar{0}&\bar{0}&\bar{0}&\bar{2}&\bar{2}&\bar{2}&\bar{1}&\bar{1}&\bar{1}\\
0&0&0&0&0&0&1&1&1&2&2&2&0&0&0&1&1&1&2&2&2&\bar{1}&\bar{1}&\bar{1}&\bar{2}&\bar{2}&\bar{2}&\bar{0}&\bar{0}&\bar{0}&\bar{1}&\bar{1}&\bar{1}&\bar{2}&\bar{2}&\bar{2}\\
0&0&0&0&0&0&0&0&0&0&0&0&\bar{0}&\bar{0}&\bar{0}&\bar{0}&\bar{0}&\bar{0}&\bar{0}&\bar{0}&\bar{0}&\bar{0}&\bar{0}&\bar{0}&\bar{0}&\bar{0}&\bar{0}&0&0&0&0&0&0&0&0&0\\
0&0&0&0&0&0&2&2&2&1&1&1&\bar{0}&\bar{0}&\bar{0}&\bar{2}&\bar{2}&\bar{2}&\bar{1}&\bar{1}&\bar{1}&\bar{2}&\bar{2}&\bar{2}&\bar{1}&\bar{1}&\bar{1}&0&0&0&2&2&2&1&1&1\\
0&0&0&0&0&0&1&1&1&2&2&2&\bar{0}&\bar{0}&\bar{0}&\bar{1}&\bar{1}&\bar{1}&\bar{2}&\bar{2}&\bar{2}&\bar{1}&\bar{1}&\bar{1}&\bar{2}&\bar{2}&\bar{2}&0&0&0&1&1&1&2&2&2\\ \hline
0&1&2&0&1&2&0&0&0&0&1&2&0&2&1&0&2&1&0&1&2&0&0&0&0&1&2&0&2&1&\bar{0}&\bar{2}&\bar{1}&\bar{0}&\bar{1}&\bar{2}\\
0&1&2&0&1&2&0&0&0&2&0&1&1&0&2&1&0&2&2&0&1&0&0&0&2&0&1&1&0&2&\bar{1}&\bar{0}&\bar{2}&\bar{2}&\bar{0}&\bar{1}\\
0&1&2&0&1&2&0&0&0&1&2&0&2&1&0&2&1&0&1&2&0&0&0&0&1&2&0&2&1&0&\bar{2}&\bar{1}&\bar{0}&\bar{1}&\bar{2}&\bar{0}\\
0&2&1&0&2&1&0&1&2&0&0&0&0&1&2&0&2&1&0&2&1&\bar{0}&\bar{1}&\bar{2}&0&0&0&0&1&2&0&2&1&\bar{0}&\bar{2}&\bar{1}\\
0&2&1&0&2&1&2&0&1&0&0&0&2&0&1&1&0&2&1&0&2&\bar{2}&\bar{0}&\bar{1}&0&0&0&2&0&1&1&0&2&\bar{1}&\bar{0}&\bar{2}\\
0&2&1&0&2&1&1&2&0&0&0&0&1&2&0&2&1&0&2&1&0&\bar{1}&\bar{2}&\bar{0}&0&0&0&1&2&0&2&1&0&\bar{2}&\bar{1}&\bar{0}\\
0&0&0&\bar{0}&\bar{0}&\bar{0}&0&2&1&0&1&2&0&0&0&0&1&2&0&2&1&\bar{0}&\bar{2}&\bar{1}&\bar{0}&\bar{1}&\bar{2}&0&0&0&0&1&2&0&2&1\\
0&0&0&\bar{0}&\bar{0}&\bar{0}&1&0&2&2&0&1&0&0&0&2&0&1&1&0&2&\bar{1}&\bar{0}&\bar{2}&\bar{2}&\bar{0}&\bar{1}&0&0&0&2&0&1&1&0&2\\
0&0&0&\bar{0}&\bar{0}&\bar{0}&2&1&0&1&2&0&0&0&0&1&2&0&2&1&0&\bar{2}&\bar{1}&\bar{0}&\bar{1}&\bar{2}&\bar{0}&0&0&0&1&2&0&2&1&0\\
0&1&2&\bar{0}&\bar{1}&\bar{2}&0&2&1&0&2&1&0&1&2&0&0&0&0&1&2&0&2&1&\bar{0}&\bar{2}&\bar{1}&\bar{0}&\bar{1}&\bar{2}&0&0&0&0&1&2\\
0&1&2&\bar{0}&\bar{1}&\bar{2}&1&0&2&1&0&2&2&0&1&0&0&0&2&0&1&1&0&2&\bar{1}&\bar{0}&\bar{2}&\bar{2}&\bar{0}&\bar{1}&0&0&0&2&0&1\\
0&1&2&\bar{0}&\bar{1}&\bar{2}&2&1&0&2&1&0&1&2&0&0&0&0&1&2&0&2&1&0&\bar{2}&\bar{1}&\bar{0}&\bar{1}&\bar{2}&\bar{0}&0&0&0&1&2&0\\
0&2&1&\bar{0}&\bar{2}&\bar{1}&0&1&2&0&2&1&0&2&1&0&1&2&0&0&0&0&1&2&0&2&1&\bar{0}&\bar{2}&\bar{1}&\bar{0}&\bar{1}&\bar{2}&0&0&0\\
0&2&1&\bar{0}&\bar{2}&\bar{1}&2&0&1&1&0&2&1&0&2&2&0&1&0&0&0&2&0&1&1&0&2&\bar{1}&\bar{0}&\bar{2}&\bar{2}&\bar{0}&\bar{1}&0&0&0\\
0&2&1&\bar{0}&\bar{2}&\bar{1}&1&2&0&2&1&0&2&1&0&1&2&0&0&0&0&1&2&0&2&1&0&\bar{2}&\bar{1}&\bar{0}&\bar{1}&\bar{2}&\bar{0}&0&0&0\\ \hline
\bar{0}&\bar{1}&\bar{2}&\bar{0}&\bar{1}&\bar{2}&0&0&0&0&1&2&0&2&1&\bar{0}&\bar{2}&\bar{1}&\bar{0}&\bar{1}&\bar{2}&0&0&0&0&1&2&0&2&1&0&2&1&0&1&2\\
\bar{0}&\bar{1}&\bar{2}&\bar{0}&\bar{1}&\bar{2}&0&0&0&2&0&1&1&0&2&\bar{1}&\bar{0}&\bar{2}&\bar{2}&\bar{0}&\bar{1}&0&0&0&2&0&1&1&0&2&1&0&2&2&0&1\\
\bar{0}&\bar{1}&\bar{2}&\bar{0}&\bar{1}&\bar{2}&0&0&0&1&2&0&2&1&0&\bar{2}&\bar{1}&\bar{0}&\bar{1}&\bar{2}&\bar{0}&0&0&0&1&2&0&2&1&0&2&1&0&1&2&0\\
\bar{0}&\bar{2}&\bar{1}&\bar{0}&\bar{2}&\bar{1}&\bar{0}&\bar{1}&\bar{2}&0&0&0&0&1&2&0&2&1&\bar{0}&\bar{2}&\bar{1}&0&1&2&0&0&0&0&1&2&0&2&1&0&2&1\\
\bar{0}&\bar{2}&\bar{1}&\bar{0}&\bar{2}&\bar{1}&\bar{2}&\bar{0}&\bar{1}&0&0&0&2&0&1&1&0&2&\bar{1}&\bar{0}&\bar{2}&2&0&1&0&0&0&2&0&1&1&0&2&1&0&2\\
\bar{0}&\bar{2}&\bar{1}&\bar{0}&\bar{2}&\bar{1}&\bar{1}&\bar{2}&\bar{0}&0&0&0&1&2&0&2&1&0&\bar{2}&\bar{1}&\bar{0}&1&2&0&0&0&0&1&2&0&2&1&0&2&1&0\\
\bar{0}&\bar{0}&\bar{0}&0&0&0&\bar{0}&\bar{2}&\bar{1}&\bar{0}&\bar{1}&\bar{2}&0&0&0&0&1&2&0&2&1&0&2&1&0&1&2&0&0&0&0&1&2&0&2&1\\
\bar{0}&\bar{0}&\bar{0}&0&0&0&\bar{1}&\bar{0}&\bar{2}&\bar{2}&\bar{0}&\bar{1}&0&0&0&2&0&1&1&0&2&1&0&2&2&0&1&0&0&0&2&0&1&1&0&2\\
\bar{0}&\bar{0}&\bar{0}&0&0&0&\bar{2}&\bar{1}&\bar{0}&\bar{1}&\bar{2}&\bar{0}&0&0&0&1&2&0&2&1&0&2&1&0&1&2&0&0&0&0&1&2&0&2&1&0\\
\bar{0}&\bar{1}&\bar{2}&0&1&2&0&2&1&\bar{0}&\bar{2}&\bar{1}&\bar{0}&\bar{1}&\bar{2}&0&0&0&0&1&2&0&2&1&0&2&1&0&1&2&0&0&0&0&1&2\\
\bar{0}&\bar{1}&\bar{2}&0&1&2&1&0&2&\bar{1}&\bar{0}&\bar{2}&\bar{2}&\bar{0}&\bar{1}&0&0&0&2&0&1&1&0&2&1&0&2&2&0&1&0&0&0&2&0&1\\
\bar{0}&\bar{1}&\bar{2}&0&1&2&2&1&0&\bar{2}&\bar{1}&\bar{0}&\bar{1}&\bar{2}&\bar{0}&0&0&0&1&2&0&2&1&0&2&1&0&1&2&0&0&0&0&1&2&0\\
\bar{0}&\bar{2}&\bar{1}&0&2&1&0&1&2&0&2&1&\bar{0}&\bar{2}&\bar{1}&\bar{0}&\bar{1}&\bar{2}&0&0&0&0&1&2&0&2&1&0&2&1&0&1&2&0&0&0\\
\bar{0}&\bar{2}&\bar{1}&0&2&1&2&0&1&1&0&2&\bar{1}&\bar{0}&\bar{2}&\bar{2}&\bar{0}&\bar{1}&0&0&0&2&0&1&1&0&2&1&0&2&2&0&1&0&0&0\\
\bar{0}&\bar{2}&\bar{1}&0&2&1&1&2&0&2&1&0&\bar{2}&\bar{1}&\bar{0}&\bar{1}&\bar{2}&\bar{0}&0&0&0&1&2&0&2&1&0&2&1&0&1&2&0&0&0&0\\
\end{array}
\right],
\]

where $\bar{a}$ corresponds to $-\zeta^a$, $a\in\{0,1,2\}$. 
\end{example}

\subsection{A relation between quaternion ETFs and complex ETFs}

For $q=z+wj\in\mathbb{H}$ where $z,w\in \mathbb{C}$, define
\begin{align*}
\text{Co}_1(z+wj)=z,\quad \text{Co}_2(z+wj)=\overline{w}.
\end{align*}
Define $[\cdot]_{\mathbb{C}}:\mathbb{H}^d\rightarrow \mathbb{C}^{2d}$ by
$$
[z+wj]_{\mathbb{C}}=\begin{pmatrix} z \\ \overline{w} \end{pmatrix}
$$
where $z,w\in \mathbb{C}^d$.

In \cite[Theorem~3.2]{waldron2020tight} it is shown that
a tight frame $V=(v_1,\ldots,v_n)$ for $\mathbb{H}^d$ corresponds to a tight frame for $\mathbb{C}^{2d}$ if and only if it satisfies
\begin{align}\label{eq:QETF}
\sum_{j,k=1}^n|\text{Co}_1(\langle v_j,v_k\rangle)|^2=\sum_{j,k=1}^n|\text{Co}_2(\langle v_j,v_k\rangle)|^2.
\end{align}

\begin{example}\label{ex:1}
Consider the first horizontal frame $H$ of the splittable QOD(16;8;8) constructed above. Upon setting $a=b=1$, we find that $(1/2)H^*H$ becomes
\[
\arraycolsep=1.0pt\def\arraystretch{1.0}
\left[
\begin{array}{cccccccccccccccc}
3&i&-&i&\bar{j}&\bar{k}&1&\bar{i}&\bar{j}&\bar{k}&\bar{j}&\bar{k}&1&\bar{i}&\bar{j}&\bar{k}\\
\bar{i}&3&\bar{i}&-&k&\bar{j}&i&1&k&\bar{j}&k&\bar{j}&i&1&k&\bar{j}\\
-&i&3&i&\bar{j}&\bar{k}&-&i&j&k&\bar{j}&\bar{k}&-&i&j&k\\
\bar{i}&-&\bar{i}&3&k&\bar{j}&\bar{i}&-&\bar{k}&j&k&\bar{j}&\bar{i}&-&\bar{k}&j\\
j&\bar{k}&j&\bar{k}&3&i&1&i&1&i&1&\bar{i}&-&\bar{i}&1&i\\
k&j&k&j&\bar{i}&3&\bar{i}&1&\bar{i}&1&i&1&i&-&\bar{i}&1\\
1&\bar{i}&-&i&1&i&3&i&1&i&1&i&1&\bar{i}&-&\bar{i}\\
i&1&\bar{i}&-&\bar{i}&1&\bar{i}&3&\bar{i}&1&\bar{i}&1&i&1&i&-\\
j&\bar{k}&\bar{j}&k&1&i&1&i&3&i&-&\bar{i}&1&i&1&\bar{i}\\
k&j&\bar{k}&\bar{j}&\bar{i}&1&\bar{i}&1&\bar{i}&3&i&-&\bar{i}&1&i&1\\
j&\bar{k}&j&\bar{k}&1&\bar{i}&1&i&-&\bar{i}&3&i&-&\bar{i}&-&\bar{i}\\
k&j&k&j&i&1&\bar{i}&1&i&-&\bar{i}&3&i&-&i&-\\
1&\bar{i}&-&i&-&\bar{i}&1&\bar{i}&1&i&-&\bar{i}&3&i&-&\bar{i}\\
i&1&\bar{i}&-&i&-&i&1&\bar{i}&1&i&-&\bar{i}&3&i&-\\
j&\bar{k}&\bar{j}&k&1&i&-&\bar{i}&1&\bar{i}&-&\bar{i}&-&\bar{i}&3&i\\
k&j&\bar{k}&\bar{j}&\bar{i}&1&i&-&i&1&i&-&i&-&\bar{i}&3\\
\end{array}
\right].
\]
It follows that $320 = \sum_{j,k=1}^{16}|\text{Co}_1(\langle v_j,v_k \rangle)|^2 \neq \sum_{j,k=1}^{16}|\text{Co}_2(\langle v_j,v_k \rangle)|^2 = 64$; whence, the quaternion frame associated with the horizontal frame $H$ is not given by a complex frame.
\end{example}

\begin{example}\label{ex:2}
Beginning with the restricted QOD(6;1;5) given by
\[
\arraycolsep=1.0pt\def\arraystretch{1.0}
\left[
\begin{array}{cccccc}
a&b&b&b&b&b\\
b&\bar{a}&bk&\bar{bk}&\bar{bk}&bk\\
b&bk&\bar{a}&bk&\bar{bk}&\bar{bk}\\
b&\bar{bk}&bk&\bar{a}&bk&\bar{bk}\\
b&\bar{bk}&\bar{bk}&bk&\bar{a}&bk\\
b&bk&\bar{bk}&\bar{bk}&bk&\bar{a}\\
\end{array}
\right],
\]
we use a complex Hadamard matrix of order six in order to construct a restricted splittable QOD(144;24,120). Again, taking $H$ to be the first horizontal frame, we find that for $(1/6)H^*H$ it follows that $29056 = \sum_{j,k=1}^{144}|\text{Co}_1(\langle v_j,v_k \rangle)|^2 \neq \sum_{j,k=1}^{144}|\text{Co}_2(\langle v_j,v_k \rangle)|^2 = 8960$. We then have a further quaternion frame not obtained from a complex frame.
\end{example}

\begin{problem} Is it possible to construct an infinite class of QOD $X_{16n^2}$ such that the equation~\eqref{eq:QETF} does not hold?
\end{problem}

\section{Unbiased orthogonal designs}

\indent Two Hadamard matrices $H_n$ and $K_n$ of order $n$ are said to be unbiased if $(1/\sqrt{n})H_nK_n^t$ is an Hadamard matrix. In this case, we can see that $n$ must be a square. The following was shown in \cite{KharSuda2}.
\begin{theorem}
Let $H = \left[\begin{smallmatrix} H_1^t & H_2^t \end{smallmatrix}\right]^t$ be a Hadamard matrix such that $H_1^tH_1 = \ell I_n + aA -a\bar{A}$. Then the following are equivalent:
\begin{enumerate}
\item $K := (1/2a)(H_1^tH_1 - H_2^tH_2)$ is an Hadamard matrix, and

\item $\ell = (n \pm \sqrt{n})/2$ and $a = \sqrt{n}/2$.
\end{enumerate}
\end{theorem}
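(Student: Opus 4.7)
The plan is to reduce both implications to a spectral analysis of $M := H_1^t H_1$. Since $H H^t = n I_n$, the $\ell$ rows of $H_1$ are pairwise orthogonal with squared norm $n$, so $H_1 H_1^t = n I_\ell$; hence $M$ and $H_1 H_1^t$ share the same nonzero spectrum and $M$ has eigenvalue $n$ with multiplicity $\ell$ and eigenvalue $0$ with multiplicity $n-\ell$. The identity $H^t H = n I_n$ also gives $H_2^t H_2 = n I_n - M$, so the defining relation for $K$ can be rewritten as
\begin{equation*}
K \;=\; \frac{1}{2a}(2M - n I_n), \qquad \text{equivalently} \qquad M \;=\; a K + \tfrac{n}{2} I_n.
\end{equation*}
In particular $M$ is symmetric, and so is $K$.

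For the direction (i) $\Rightarrow$ (ii), first observe that $A - \bar{A}$ has zero diagonal, so every diagonal entry of $K$ equals $(2\ell - n)/(2a)$; if $K$ is a Hadamard matrix, this common value must lie in $\{\pm 1\}$. Next, from the affine relation above and the spectrum of $M$, the eigenvalues of $K$ are $n/(2a)$ with multiplicity $\ell$ and $-n/(2a)$ with multiplicity $n-\ell$. Since $K$ is symmetric, Hadamardness is equivalent to $K^2 = n I_n$, which forces $(n/(2a))^2 = n$; taking $a>0$ (the opposite sign merely swaps the roles of $A$ and $\bar{A}$), this yields $a = \sqrt{n}/2$. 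Substituting back into $(2\ell - n)/(2a) = \pm 1$ gives $\ell = (n \pm \sqrt{n})/2$.

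Conversely, assuming (ii), the off-diagonal entries of $K$ inherit the $\pm 1$ pattern of $A - \bar{A}$, and the diagonal entries simplify to $(2\ell - n)/\sqrt{n} = \pm 1$, so $K \in \{\pm 1\}^{n \times n}$. The eigenvalue computation above, specialized to $a = \sqrt{n}/2$, gives eigenvalues $\pm \sqrt{n}$, so $K^2 = n I_n$; combined with the symmetry of $K$, this is $K K^t = n I_n$, and therefore $K$ is a Hadamard matrix.

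The main obstacle is identifying the right pivot, which is the observation that the two conditions ``$K$ has diagonal in $\{\pm 1\}$'' and ``$K^2 = nI_n$'' translate, via $M = aK + (n/2)I_n$ and the forced spectrum $\{n^{(\ell)}, 0^{(n-\ell)}\}$ of $M$, into two scalar equations in $a$ and $\ell$ that pin them both down uniquely (up to the sign choice in $\ell$). Everything else is routine bookkeeping with the identity $M + H_2^t H_2 = n I_n$.
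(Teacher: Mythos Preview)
The paper does not actually prove this theorem; it quotes the result from \cite{KharSuda2} and uses it as a black box. So there is no in-paper proof to compare against.

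That said, your argument is correct and self-contained. The key identity $H_1^tH_1 + H_2^tH_2 = nI_n$ (from $H^tH = nI_n$) reduces $K$ to an affine function of $M = H_1^tH_1$, and the spectrum $\{n^{(\ell)},0^{(n-\ell)}\}$ of $M$ (inherited from $H_1H_1^t = nI_\ell$) then pins down both the diagonal condition and the orthogonality condition $K^2 = nI_n$ as the two scalar equations $(2\ell - n)/(2a) \in \{\pm 1\}$ and $(n/(2a))^2 = n$. Both directions fall out cleanly. The only cosmetic point is the sign of $a$: the theorem as stated implicitly takes $a>0$, and your remark that negating $a$ swaps $A$ and $\bar A$ handles this adequately.
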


Let $X_{4n^2}$ be as in \eqref{eq:X}. Then $X_{4n^2}$ becomes an Hadamard matrix upon replacing each indeterminate with unity. Moreover, the matrix is balancedly splittable with $b = -a = n$ and $\ell = 2n^2-n$. We have, therefore, that condition (ii) is satisfied, and we can construct an unbiased pair of Hadamard matrices. Since the vertical frames of $X_{4n^2}$ induce a stable split, however, we can in fact obtain more general result than this as follows.

In \cite{KharSuda1} unbiased orthogonal designs were introduced and are defined thus.

\begin{definition}
Let $X_n$ and $Y_n$ each be an OD$(n;s_1,s_2,\dots,s_u)$. If $X_nY_n^t = (\sigma/\sqrt{\alpha})W$, for some weighing matrix $W$ and some real number $\alpha$, then $X_n$ and $Y_n$ are said to be unbiased.
\end{definition}
Extending this, unbiasedness is defined for quaternionic orthogonal designs.
\begin{definition}
Let $X_n$ and $Y_n$ each be a QOD$(n;s_1,s_2,\dots,s_u)$. If $X_nY_n^* = (\sigma/\sqrt{\alpha})W$, for some weighing matrix $W$ and some real number $\alpha$, then $X_n$ and $Y_n$ are said to be unbiased.
\end{definition}

It is at this point that the stability of the vertical frames of the design becomes important, for the next result is a consequence of this property.

\begin{theorem}
Let $X_{4n^2}$ be as in \eqref{eq:X}, and let
\begin{align*}
V =
\begin{bmatrix}
-F \\ B \\ A
\end{bmatrix}
,\quad
U =
\begin{bmatrix}
G & F \\ E & A \\ -E & B
\end{bmatrix}
\end{align*}
so that $X_{4n^2} = [U \text{ } V]$. Take $Y_{4n^2} = [U \text{ } -V]$. Then $Y_{4n^2}$ is a QOD$(4n^2;4ns_1,4ns_2,\dots,4ns_u)$ and is unbiased to $X_{4n^2}$. In particular, $X_{4n^2}Y_{4n^2}^* = 2\sigma K_{4n^2}$, where $K_{4n^2}$ is an Hadamard matrix.
\end{theorem}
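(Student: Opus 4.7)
The plan is to leverage the stable split from Theorem~\ref{thm:bqod}(ii) together with the QOD identity $X_{4n^2}X_{4n^2}^* = 4n\sigma I_{4n^2}$ from Theorem~\ref{thm:qod} to pin down $X_{4n^2}Y_{4n^2}^*$ directly. Writing $X_{4n^2} = [U\ V]$ and $Y_{4n^2} = [U\ -V]$, one has $Y_{4n^2} Y_{4n^2}^* = UU^* + VV^* = X_{4n^2} X_{4n^2}^* = 4n\sigma I_{4n^2}$, so $Y_{4n^2}$ is a QOD$(4n^2; 4ns_1,\ldots,4ns_u)$ of the claimed type. The cross product telescopes to $X_{4n^2} Y_{4n^2}^* = UU^* - VV^* = (UU^* + VV^*) - 2VV^* = 4n\sigma I_{4n^2} - 2VV^*$, so the natural candidate is $K_{4n^2} := 2nI_{4n^2} - (1/\sigma)VV^*$, for which $X_{4n^2}Y_{4n^2}^* = 2\sigma K_{4n^2}$.

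It remains to show that $K_{4n^2}$ is a quaternionic Hadamard matrix. For the entry condition, I would apply Theorem~\ref{thm:bqod}(ii) to the vertical frame $V=[-F;B;A]$: its stable split gives that $VV^*$ has all diagonal entries equal to $\sigma(2n-1)$ and off-diagonal entries of the form $\pm\varepsilon\sigma$ with $\varepsilon\in\{1,i,j,k\}$. Substituting, the diagonal entries of $K_{4n^2}$ are $2n-(2n-1)=1$ and the off-diagonal entries lie in $\{\mp\varepsilon:\varepsilon\in\{1,i,j,k\}\}$, so every entry of $K_{4n^2}$ is a quaternion unit. For the orthogonality $K_{4n^2}K_{4n^2}^* = 4n^2 I_{4n^2}$, one computes $K_{4n^2}K_{4n^2}^* = (1/4\sigma^2) X_{4n^2}(Y_{4n^2}^* Y_{4n^2})X_{4n^2}^* = (n/\sigma)X_{4n^2}X_{4n^2}^* = 4n^2 I_{4n^2}$, using that $Y_{4n^2}^* Y_{4n^2} = 4n\sigma I$ (which follows from $Y_{4n^2}Y_{4n^2}^* = 4n\sigma I$ for a full QOD).

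The main obstacle is the entry analysis of $VV^*$: one needs not only that the off-diagonals have absolute value $\sigma$ but also that every diagonal entry is exactly $\sigma(2n-1)$. Direct computation of the row squared-norms of $-F$, $B$, and $A$ (using $\|r_i\|^2 = \sigma$ for rows of $X_n$ and that quaternionic Hadamard entries have unit modulus) confirms the diagonal value. The off-diagonal values follow by adapting the block calculations from the proof of Theorem~\ref{thm:bqod}(ii), which handled the companion frame $[F;A;B]$, to the current choice $V=[-F;B;A]$; the sign changes in $VV^*$'s entries preserve absolute values, so the stable-split property transfers and the remainder of the argument is routine bookkeeping.
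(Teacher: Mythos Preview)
Your proof is correct and follows essentially the paper's approach: both identify $K_{4n^2}=(1/2\sigma)(UU^*-VV^*)$, invoke the stable split of Theorem~\ref{thm:bqod}(ii) for the entry condition, and then verify $K_{4n^2}K_{4n^2}^*=4n^2I$. Your orthogonality check via $K_{4n^2}=(1/2\sigma)X_{4n^2}Y_{4n^2}^*$ together with $Y_{4n^2}^*Y_{4n^2}=4n\sigma I$ is a mild streamlining of the paper's direct expansion of $(UU^*-VV^*)^2$, which instead relies (implicitly) on $U^*V=0$ and $(UU^*)^2=4n\sigma\,UU^*$; you also correctly note that in the general quaternionic setting the entries of $K_{4n^2}$ are quaternion units rather than merely $\pm1$.
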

\begin{proof}
Clearly, $Y_{4n^2}$ is a QOD$(4n^2;4ns_1,4ns_2,\dots,4ns_u)$. Then $X_{4n^2}Y_{4n^2}^t = (UU^t - VV^t)$. Take $K_{4n^2} = (1/2\sigma)(UU^t - VV^t)$. We claim that $K_{4n^2}$ is Hadamard. Indeed,
$K_{4n^2}$ is a $(1,-1)$-matrix, and
\begin{align*}
K_{4n^2}K_{4n^2}^t &= K_{4n^2}^2 = \frac{1}{4\sigma^2}(UU^tUU^t + VV^tVV^t) \\
&= \frac{n}{\sigma}(UU^t + VV^t) = 4n^2I_{4n^2},
\end{align*}
where we have used the fact that since the off-diagonal entries of $VV^t$ are $\pm\sigma$, the off-diagonal entries of $UU^t$ must correspondingly be $\mp\sigma$.
\end{proof}
\begin{remark}
The resulting real Hadamard martrix $K_{4n^2}$ is not the Hadamard matrix obtained from $X_{4n^2}$ by replacing each indetermiante with unity, because $X_{4n^2}$ is not necessarily a real orthogonal design. 
\end{remark}
\begin{example}
Using the balancedly splittable OD$(16;8,8)$, we obtain the Hadamard matrix \\
\[\arraycolsep=1.0pt\def\arraystretch{1.0}
\left[
\begin{array}{cccccccccccccccc}
 1 & 1 & 1 & 1 & 1 & 1 & 1 & 1 & 1 & 1 & 1 & 1 & 1 & 1 & 1 & 1 \\
 1 & 1 & 1 & 1 & - & - & 1 & 1 & - & - & - & - & 1 & 1 & - & - \\
 1 & 1 & 1 & 1 & 1 & 1 & - & - & - & - & 1 & 1 & - & - & - & - \\
 1 & 1 & 1 & 1 & - & - & - & - & 1 & 1 & - & - & - & - & 1 & 1 \\
 1 & - & 1 & - & 1 & 1 & 1 & - & 1 & - & 1 & 1 & 1 & - & - & 1 \\
 1 & - & 1 & - & 1 & 1 & - & 1 & - & 1 & 1 & 1 & - & 1 & 1 & - \\
 1 & 1 & - & - & 1 & - & 1 & 1 & 1 & - & - & 1 & 1 & 1 & 1 & - \\
 1 & 1 & - & - & - & 1 & 1 & 1 & - & 1 & 1 & - & 1 & 1 & - & 1 \\
 1 & - & - & 1 & 1 & - & 1 & - & 1 & 1 & 1 & - & - & 1 & 1 & 1 \\
 1 & - & - & 1 & - & 1 & - & 1 & 1 & 1 & - & 1 & 1 & - & 1 & 1 \\
 1 & - & 1 & - & 1 & 1 & - & 1 & 1 & - & 1 & 1 & - & 1 & - & 1 \\
 1 & - & 1 & - & 1 & 1 & 1 & - & - & 1 & 1 & 1 & 1 & - & 1 & - \\
 1 & 1 & - & - & 1 & - & 1 & 1 & - & 1 & - & 1 & 1 & 1 & - & 1 \\
 1 & 1 & - & - & - & 1 & 1 & 1 & 1 & - & 1 & - & 1 & 1 & 1 & - \\
 1 & - & - & 1 & - & 1 & 1 & - & 1 & 1 & - & 1 & - & 1 & 1 & 1 \\
 1 & - & - & 1 & 1 & - & - & 1 & 1 & 1 & 1 & - & 1 & - & 1 & 1 \\
\end{array}
\right].
\] \\
\end{example}

\begin{example}
Take the BH$(36,6)$ in Example 4, whose form is $X = [U^t \text{ } V^t]^t$ by taking the horizontal frames, and let $Y = [U^t \text{ } -V^t]^t$. Then $(1/n)XY^*$ is a Butson B$(36,6)$ Hadamard matrix $K = (U^tU-V^t V)$ given by
\[
\arraycolsep=1.0pt\def\arraystretch{1.0}
\left[
\begin{array}{cccccccccccccccccccccccccccccccccccc}
0&0&0&0&0&0&0&0&0&0&0&0&0&0&0&0&0&0&0&0&0&0&0&0&0&0&0&0&0&0&0&0&0&0&0&0\\
0&0&0&0&0&0&2&2&2&1&1&1&0&0&0&2&2&2&1&1&1&2&2&2&1&1&1&0&0&0&2&2&2&1&1&1\\
0&0&0&0&0&0&1&1&1&2&2&2&0&0&0&1&1&1&2&2&2&1&1&1&2&2&2&0&0&0&1&1&1&2&2&2\\
0&0&0&0&0&0&0&0&0&0&0&0&\bar{0}&\bar{0}&\bar{0}&\bar{0}&\bar{0}&\bar{0}&\bar{0}&\bar{0}&\bar{0}&0&0&0&0&0&0&\bar{0}&\bar{0}&\bar{0}&\bar{0}&\bar{0}&\bar{0}&\bar{0}&\bar{0}&\bar{0}\\
0&0&0&0&0&0&2&2&2&1&1&1&\bar{0}&\bar{0}&\bar{0}&\bar{2}&\bar{2}&\bar{2}&\bar{1}&\bar{1}&\bar{1}&2&2&2&1&1&1&\bar{0}&\bar{0}&\bar{0}&\bar{2}&\bar{2}&\bar{2}&\bar{1}&\bar{1}&\bar{1}\\
0&0&0&0&0&0&1&1&1&2&2&2&\bar{0}&\bar{0}&\bar{0}&\bar{1}&\bar{1}&\bar{1}&\bar{2}&\bar{2}&\bar{2}&1&1&1&2&2&2&\bar{0}&\bar{0}&\bar{0}&\bar{1}&\bar{1}&\bar{1}&\bar{2}&\bar{2}&\bar{2}\\
0&1&2&0&1&2&0&0&0&0&2&1&0&1&2&0&1&2&0&2&1&\bar{0}&\bar{0}&\bar{0}&0&2&1&\bar{0}&\bar{1}&\bar{2}&0&1&2&\bar{0}&\bar{2}&\bar{1}\\
0&1&2&0&1&2&0&0&0&1&0&2&2&0&1&2&0&1&1&0&2&\bar{0}&\bar{0}&\bar{0}&1&0&2&\bar{2}&\bar{0}&\bar{1}&2&0&1&\bar{1}&\bar{0}&\bar{2}\\
0&1&2&0&1&2&0&0&0&2&1&0&1&2&0&1&2&0&2&1&0&\bar{0}&\bar{0}&\bar{0}&2&1&0&\bar{1}&\bar{2}&\bar{0}&1&2&0&\bar{2}&\bar{1}&\bar{0}\\
0&2&1&0&2&1&0&2&1&0&0&0&0&2&1&0&1&2&0&1&2&\bar{0}&\bar{2}&\bar{1}&\bar{0}&\bar{0}&\bar{0}&0&2&1&\bar{0}&\bar{1}&\bar{2}&0&1&2\\
0&2&1&0&2&1&1&0&2&0&0&0&1&0&2&2&0&1&2&0&1&\bar{1}&\bar{0}&\bar{2}&\bar{0}&\bar{0}&\bar{0}&1&0&2&\bar{2}&\bar{0}&\bar{1}&2&0&1\\
0&2&1&0&2&1&2&1&0&0&0&0&2&1&0&1&2&0&1&2&0&\bar{2}&\bar{1}&\bar{0}&\bar{0}&\bar{0}&\bar{0}&2&1&0&\bar{1}&\bar{2}&\bar{0}&1&2&0\\
0&0&0&\bar{0}&\bar{0}&\bar{0}&0&1&2&0&2&1&0&0&0&0&2&1&0&1&2&0&1&2&\bar{0}&\bar{2}&\bar{1}&\bar{0}&\bar{0}&\bar{0}&0&2&1&\bar{0}&\bar{1}&\bar{2}\\
0&0&0&\bar{0}&\bar{0}&\bar{0}&2&0&1&1&0&2&0&0&0&1&0&2&2&0&1&2&0&1&\bar{1}&\bar{0}&\bar{2}&\bar{0}&\bar{0}&\bar{0}&1&0&2&\bar{2}&\bar{0}&\bar{1}\\
0&0&0&\bar{0}&\bar{0}&\bar{0}&1&2&0&2&1&0&0&0&0&2&1&0&1&2&0&1&2&0&\bar{2}&\bar{1}&\bar{0}&\bar{0}&\bar{0}&\bar{0}&2&1&0&\bar{1}&\bar{2}&\bar{0}\\
0&1&2&\bar{0}&\bar{1}&\bar{2}&0&1&2&0&1&2&0&2&1&0&0&0&0&2&1&\bar{0}&\bar{1}&\bar{2}&0&1&2&\bar{0}&\bar{2}&\bar{1}&\bar{0}&\bar{0}&\bar{0}&0&2&1\\
0&1&2&\bar{0}&\bar{1}&\bar{2}&2&0&1&2&0&1&1&0&2&0&0&0&1&0&2&\bar{2}&\bar{0}&\bar{1}&2&0&1&\bar{1}&\bar{0}&\bar{2}&\bar{0}&\bar{0}&\bar{0}&1&0&2\\
0&1&2&\bar{0}&\bar{1}&\bar{2}&1&2&0&1&2&0&2&1&0&0&0&0&2&1&0&\bar{1}&\bar{2}&\bar{0}&1&2&0&\bar{2}&\bar{1}&\bar{0}&\bar{0}&\bar{0}&\bar{0}&2&1&0\\
0&2&1&\bar{0}&\bar{2}&\bar{1}&0&2&1&0&1&2&0&1&2&0&2&1&0&0&0&0&2&1&\bar{0}&\bar{1}&\bar{2}&0&1&2&\bar{0}&\bar{2}&\bar{1}&\bar{0}&\bar{0}&\bar{0}\\
0&2&1&\bar{0}&\bar{2}&\bar{1}&1&0&2&2&0&1&2&0&1&1&0&2&0&0&0&1&0&2&\bar{2}&\bar{0}&\bar{1}&2&0&1&\bar{1}&\bar{0}&\bar{2}&\bar{0}&\bar{0}&\bar{0}\\
0&2&1&\bar{0}&\bar{2}&\bar{1}&2&1&0&1&2&0&1&2&0&2&1&0&0&0&0&2&1&0&\bar{1}&\bar{2}&\bar{0}&1&2&0&\bar{2}&\bar{1}&\bar{0}&\bar{0}&\bar{0}&\bar{0}\\
0&1&2&0&1&2&\bar{0}&\bar{0}&\bar{0}&\bar{0}&\bar{2}&\bar{1}&0&1&2&\bar{0}&\bar{1}&\bar{2}&0&2&1&0&0&0&\bar{0}&\bar{2}&\bar{1}&\bar{0}&\bar{1}&\bar{2}&\bar{0}&\bar{1}&\bar{2}&\bar{0}&\bar{2}&\bar{1}\\
0&1&2&0&1&2&\bar{0}&\bar{0}&\bar{0}&\bar{1}&\bar{0}&\bar{2}&2&0&1&\bar{2}&\bar{0}&\bar{1}&1&0&2&0&0&0&\bar{1}&\bar{0}&\bar{2}&\bar{2}&\bar{0}&\bar{1}&\bar{2}&\bar{0}&\bar{1}&\bar{1}&\bar{0}&\bar{2}\\
0&1&2&0&1&2&\bar{0}&\bar{0}&\bar{0}&\bar{2}&\bar{1}&\bar{0}&1&2&0&\bar{1}&\bar{2}&\bar{0}&2&1&0&0&0&0&\bar{2}&\bar{1}&\bar{0}&\bar{1}&\bar{2}&\bar{0}&\bar{1}&\bar{2}&\bar{0}&\bar{2}&\bar{1}&\bar{0}\\
0&2&1&0&2&1&0&2&1&\bar{0}&\bar{0}&\bar{0}&\bar{0}&\bar{2}&\bar{1}&0&1&2&\bar{0}&\bar{1}&\bar{2}&\bar{0}&\bar{2}&\bar{1}&0&0&0&\bar{0}&\bar{2}&\bar{1}&\bar{0}&\bar{1}&\bar{2}&\bar{0}&\bar{1}&\bar{2}\\
0&2&1&0&2&1&1&0&2&\bar{0}&\bar{0}&\bar{0}&\bar{1}&\bar{0}&\bar{2}&2&0&1&\bar{2}&\bar{0}&\bar{1}&\bar{1}&\bar{0}&\bar{2}&0&0&0&\bar{1}&\bar{0}&\bar{2}&\bar{2}&\bar{0}&\bar{1}&\bar{2}&\bar{0}&\bar{1}\\
0&2&1&0&2&1&2&1&0&\bar{0}&\bar{0}&\bar{0}&\bar{2}&\bar{1}&\bar{0}&1&2&0&\bar{1}&\bar{2}&\bar{0}&\bar{2}&\bar{1}&\bar{0}&0&0&0&\bar{2}&\bar{1}&\bar{0}&\bar{1}&\bar{2}&\bar{0}&\bar{1}&\bar{2}&\bar{0}\\
0&0&0&\bar{0}&\bar{0}&\bar{0}&\bar{0}&\bar{1}&\bar{2}&0&2&1&\bar{0}&\bar{0}&\bar{0}&\bar{0}&\bar{2}&\bar{1}&0&1&2&\bar{0}&\bar{1}&\bar{2}&\bar{0}&\bar{2}&\bar{1}&0&0&0&\bar{0}&\bar{2}&\bar{1}&\bar{0}&\bar{1}&\bar{2}\\
0&0&0&\bar{0}&\bar{0}&\bar{0}&\bar{2}&\bar{0}&\bar{1}&1&0&2&\bar{0}&\bar{0}&\bar{0}&\bar{1}&\bar{0}&\bar{2}&2&0&1&\bar{2}&\bar{0}&\bar{1}&\bar{1}&\bar{0}&\bar{2}&0&0&0&\bar{1}&\bar{0}&\bar{2}&\bar{2}&\bar{0}&\bar{1}\\
0&0&0&\bar{0}&\bar{0}&\bar{0}&\bar{1}&\bar{2}&\bar{0}&2&1&0&\bar{0}&\bar{0}&\bar{0}&\bar{2}&\bar{1}&\bar{0}&1&2&0&\bar{1}&\bar{2}&\bar{0}&\bar{2}&\bar{1}&\bar{0}&0&0&0&\bar{2}&\bar{1}&\bar{0}&\bar{1}&\bar{2}&\bar{0}\\
0&1&2&\bar{0}&\bar{1}&\bar{2}&0&1&2&\bar{0}&\bar{1}&\bar{2}&0&2&1&\bar{0}&\bar{0}&\bar{0}&\bar{0}&\bar{2}&\bar{1}&\bar{0}&\bar{1}&\bar{2}&\bar{0}&\bar{1}&\bar{2}&\bar{0}&\bar{2}&\bar{1}&0&0&0&\bar{0}&\bar{2}&\bar{1}\\
0&1&2&\bar{0}&\bar{1}&\bar{2}&2&0&1&\bar{2}&\bar{0}&\bar{1}&1&0&2&\bar{0}&\bar{0}&\bar{0}&\bar{1}&\bar{0}&\bar{2}&\bar{2}&\bar{0}&\bar{1}&\bar{2}&\bar{0}&\bar{1}&\bar{1}&\bar{0}&\bar{2}&0&0&0&\bar{1}&\bar{0}&\bar{2}\\
0&1&2&\bar{0}&\bar{1}&\bar{2}&1&2&0&\bar{1}&\bar{2}&\bar{0}&2&1&0&\bar{0}&\bar{0}&\bar{0}&\bar{2}&\bar{1}&\bar{0}&\bar{1}&\bar{2}&\bar{0}&\bar{1}&\bar{2}&\bar{0}&\bar{2}&\bar{1}&\bar{0}&0&0&0&\bar{2}&\bar{1}&\bar{0}\\
0&2&1&\bar{0}&\bar{2}&\bar{1}&\bar{0}&\bar{2}&\bar{1}&0&1&2&\bar{0}&\bar{1}&\bar{2}&0&2&1&\bar{0}&\bar{0}&\bar{0}&\bar{0}&\bar{2}&\bar{1}&\bar{0}&\bar{1}&\bar{2}&\bar{0}&\bar{1}&\bar{2}&\bar{0}&\bar{2}&\bar{1}&0&0&0\\
0&2&1&\bar{0}&\bar{2}&\bar{1}&\bar{1}&\bar{0}&\bar{2}&2&0&1&\bar{2}&\bar{0}&\bar{1}&1&0&2&\bar{0}&\bar{0}&\bar{0}&\bar{1}&\bar{0}&\bar{2}&\bar{2}&\bar{0}&\bar{1}&\bar{2}&\bar{0}&\bar{1}&\bar{1}&\bar{0}&\bar{2}&0&0&0\\
0&2&1&\bar{0}&\bar{2}&\bar{1}&\bar{2}&\bar{1}&\bar{0}&1&2&0&\bar{1}&\bar{2}&\bar{0}&2&1&0&\bar{0}&\bar{0}&\bar{0}&\bar{2}&\bar{1}&\bar{0}&\bar{1}&\bar{2}&\bar{0}&\bar{1}&\bar{2}&\bar{0}&\bar{2}&\bar{1}&\bar{0}&0&0&0\\
\end{array}
\right].
\]
\end{example}

\section{Concluding remarks}
As a summary, we have obtained the following results in the paper.
\begin{enumerate}
\item A recursive method to obtain a quaternionic full orthogonal design of order $4n^2$ from a quaternion full orthogonal design of order $n$ (Theorem~\ref{thm:qod}).
\item The constructed orthogonal designs in the above are balancedly splittable (Theorem~\ref{thm:bqod}).
\item Balancedly splittable QODs yield quaternionic equiangular tight frames (Proposition~\ref{prop:ETF}) and the constructed QOD of small orders $16,144$ yields quaternionic ETFs not obtainable from complex ETFs (Examples~\ref{ex:1},\ref{ex:2})
\item It is shown in \cite{KharSuda2} that there is no balancedly splittable Hadamard matrix of order $4n^2$, $n$ odd. Therefore, there is no full, balancedly splittable orthogonal design of order $4n^2$, $n$ odd. The existence of balancedly splittable real orthogonal designs of order $4n^2$ is shown here for any $n$, which is the order of a full orthogonal design. The existence of balancedly splittable full orthogonal designs of order $16n^2$, $n$ odd remains open for the real case. The first open case in the real case is $n=144$.\end{enumerate}

\section*{Acknowledgments.}
The authors would like to thank the anonymous referees for valuable comments. 
Hadi Kharaghani is supported by the Natural Sciences and
Engineering  Research Council of Canada (NSERC).  Sho Suda is supported by JSPS KAKENHI Grant Number 18K03395.

\begin{rezabib}

\bib{CCHT2018}{article}{    
author = {Jameson Cahill}, 
author={Peter G. Casazza}, 
author={John I. Haas},
author={Janet Tremain},
 title = {Constructions of biangular tight frames and their
              relationships with equiangular tight frames},
 BOOKTITLE = {Frames and harmonic analysis, AMS Contemporary Math.},
    SERIES = {Contemp. Math.},
    VOLUME = {706},
     PAGES = {1--19},
 PUBLISHER = {Amer. Math. Soc., Providence, RI},
      date= {(2018)}
}
\bib{FKS2018}{article}{
 author ={Kai Fender},
 author={Kharaghani, Hadi},
 author={Suda, Sho},
title = {On a class of quaternary complex Hadamard matrices},
journal = {Discrete Mathematics},
volume = {341},
number = {2},
pages = {421 - 426},
year = {2018},
}

\bib{Khar}{article}{
   author={Hadi Kharaghani, },
   title={New class of weighing matrices},
   journal={Ars Combin.},
   volume={19},
   date={1985},
   pages={69--72},
   issn={0381-7032},
}

\bib{KharSuda1}{article}{
   author={Kharaghani, Hadi},
   author={Suda, Sho},
   title={Unbiased orthogonal designs},
   journal={Des. Codes Cryptogr.},
   volume={86},
   date={2018},
   number={7},
   pages={1573--1588},
   issn={0925-1022},
}

\bib{KharSuda2}{article}{
   author={Kharaghani, Hadi},
   author={Suda, Sho},
   title={Balancedly splittable Hadamard matrices},
   journal={Discrete Math.},
   volume={342},
   date={2019},
   number={2},
   pages={546--561},
   issn={0012-365X},
}

\bib{Seb}{book}{
   author={Seberry, Jennifer},
   title={Orthogonal designs},
   note={Hadamard matrices, quadratic forms and algebras;
   Revised and updated edition of the 1979 original [ MR0534614]},
   publisher={Springer, Cham},
   date={2017},
   pages={xxiii+453},
   isbn={978-3-319-59031-8},
   isbn={978-3-319-59032-5},
   doi={10.1007/978-3-319-59032-5},
}
\bib{waldron2020tight}{article}{
      title={Tight frames over the quaternions and equiangular lines},
      author={Shayne Waldron},
      journal = {arXiv:2006.06126},
      archivePrefix={arXiv},
      primaryClass={math.FA}
}
\end{rezabib}
\end{document}